\documentclass[11pt]{article}

\usepackage{amsmath,amssymb,amsthm,mathtools}
\usepackage[margin=1.05in]{geometry}
\usepackage[T1]{fontenc}
\usepackage{lmodern}
\usepackage{enumitem}

\DeclareSymbolFont{sfoperators}{OT1}{cmss}{m}{n}

\makeatletter
\renewcommand{\operator@font}{\mathgroup\symsfoperators}
\makeatother

\newcommand{\R}{\mathbb R}
\newcommand{\C}{\mathbb C}
\newcommand{\calH}{\mathcal H}
\newcommand{\calV}{\mathcal V}
\newcommand{\calL}{\mathcal L}
\newcommand{\calA}{\mathcal A}

\newcommand{\SDiff}{\operatorname{SDiff}}
\newcommand{\Diff}{\operatorname{Diff}}
\newcommand{\diver}{\operatorname{div}}
\newcommand{\Span}{\operatorname{span}}
\newcommand{\Lie}{\operatorname{Lie}}
\newcommand{\ad}{\operatorname{ad}}
\newcommand{\calM}{\mathcal{M}}

\newtheorem{theorem}{Theorem}
\newtheorem{proposition}[theorem]{Proposition}
\newtheorem{lemma}[theorem]{Lemma}

\theoremstyle{remark}

\title{Rotations, Shears, and Two-Generation on Spheres}
\author{Dennis DeTurck\\[3pt]\small Department of Mathematics\\\small University of Pennsylvania}
\date{\today}

\begin{document}
\maketitle

\begin{abstract}
For every round sphere $S^n$, $n\ge2$, we exhibit two explicit smooth
vector fields whose flows generate a $C^\infty$-dense subgroup of
$\Diff_0(S^n)$.  By deleting one gradient component from the second
generator, we obtain two explicit divergence-free vector fields whose flows
generate a $C^\infty$-dense subgroup of $\SDiff_0(S^n)$.  In both cases
two generators are minimal.

The proof first generates the divergence-free vector fields using rotations,
a linearly modulated rotational shear, and transverse vector spherical
harmonics.  On $S^2$ this reduces to the Hamiltonian/Poisson formalism; for
$n\ge4$ irreducibility of the transverse harmonic spaces completes the
degree-raising argument, while $S^3$ requires a separate treatment of its
two curl eigenspaces.  Adding one gradient field and using the divergence map
then generates all smooth vector fields.  Finally, a Casimir operator for a
recovered copy of $\mathfrak{so}(3)$ separates the components of the
generating fields and compresses the natural finite generating families to
two.
\end{abstract}

\section{Introduction}

A rotational shear of a sphere is obtained by choosing an axis and a
rotation plane perpendicular to it, and rotating that plane through an angle
that depends on the coordinate along the axis.  Such maps arose naturally
in our work on the generalized Minkowski problem
 \cite{DeTurckMinkowski}, where a restricted family of
volume-preserving diffeomorphisms suffices to solve a first-moment centering
problem.  This led first to the question of how few volume-preserving flows
are needed to generate, up to smooth approximation, the entire identity
component of the volume-preserving diffeomorphism group of a sphere.  The
same construction leads naturally to the corresponding question for the full
diffeomorphism group.

Let
$$
  S^n=\{x=(x_1,\ldots,x_{n+1})\in\R^{n+1}:|x|=1\},
  \qquad n\ge2,
$$
and for $i\ne j$ put
$$
  R_{ij}
  =
  x_j\frac{\partial}{\partial x_i}
  -
  x_i\frac{\partial}{\partial x_j}.
$$
The flow of $R_{ij}$ is ordinary rotation in the $x_ix_j$-plane.  We also
use the linearly modulated rotational shear
$$
  S=x_3R_{12}.
$$
Its flow is
$$
  (x_1+ix_2,x_3,x_4,\ldots,x_{n+1})
  \longmapsto
  \bigl(e^{-itx_3}(x_1+ix_2),x_3,x_4,\ldots,x_{n+1}\bigr).
$$
Both the rotations and $S$ are divergence-free.

For the full diffeomorphism group we add one gradient component.  Define
$$
  h_n=
  \begin{cases}
    x_3, & n=2,\\[2mm]
    2x_3^3-3x_3(x_1^2+x_2^2), & n\ge3,
  \end{cases}
  \qquad
  D_n=\nabla_{S^n}h_n.
$$
In either case $h_n$ is a scalar spherical harmonic depending only on the
first three coordinates and invariant under rotation in the $x_1x_2$-plane.

The two divergence-free vector fields used below are
$$
  A=R_{12},
  \qquad
  B=S+R_{23}+R_{34}+\cdots+R_{n,n+1},
$$
and for the full diffeomorphism group we replace $B$ by
$$
  \widetilde B=B+D_n.
$$
For vector fields $X,Y$, let $G(X,Y)$ denote the subgroup generated by the
two one-parameter groups $\{\exp(tX)\}$ and $\{\exp(tY)\}$, $t\in\R$.
Thus
$$
  G(A,B)\subset\SDiff_0(S^n),
  \qquad
  G(A,\widetilde B)\subset\Diff_0(S^n).
$$
Here $\SDiff_0(S^n)$ denotes the group of smooth volume-preserving
diffeomorphisms smoothly isotopic to the identity through
volume-preserving diffeomorphisms, and $\Diff_0(S^n)$ denotes the identity
component of the full smooth diffeomorphism group.

Our main result is the following.

\begin{theorem}\label{thm:main}
For every $n\ge2$:
\vspace{-5pt}
\begin{enumerate}[label=\textup{(\alph*)}]
\item
$
  \overline{G(A,\widetilde B)}^{\,C^\infty}
  =
  \Diff_0(S^n).
$
\item
$
  \overline{G(A,B)}^{\,C^\infty}
  =
  \SDiff_0(S^n).
$
\end{enumerate}
In particular, each of the groups $\Diff_0(S^n)$ and $\SDiff_0(S^n)$ admits a $C^\infty$-dense subgroup 
generated by two explicit one-parameter groups. 
\end{theorem}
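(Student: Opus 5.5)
The plan is to pass to Lie algebras. The standard reduction, used in the literature on generating diffeomorphism groups by flows, is the following. Let $\mathfrak g$ be the smallest Lie algebra of smooth vector fields containing $X$ and $Y$ and closed under brackets. If $\mathfrak g$ is $C^\infty$-dense in $\mathfrak{X}(S^n)$, respectively in the divergence-free fields $\mathfrak{X}_\mu(S^n)$, then $\overline{G(X,Y)}$ is the whole identity component. The mechanism is that flows of brackets are $C^\infty$-limits of commutators of flows, $\exp(t[X,Y])=\lim_k\bigl(\exp(\tfrac{\sqrt t}{k}X)\exp(\tfrac{\sqrt t}{k}Y)\exp(-\tfrac{\sqrt t}{k}X)\exp(-\tfrac{\sqrt t}{k}Y)\bigr)^{k^2}$. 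Sums come from the Trotter product formula. Both formulas converge in $C^\infty$ on a compact manifold. The closure $\overline G$ is then a closed subgroup containing $\exp(tZ)$ for every $Z$ in the closure of $\mathfrak g$. Since $\Diff_0$ and $\SDiff_0$ are generated by time-one maps of, respectively, arbitrary and divergence-free (possibly time-dependent, approximated by piecewise-autonomous) fields, density follows. The whole problem is therefore to show that $\Lie(A,B)$ is dense in divergence-free fields and that $\Lie(A,\widetilde B)$ is dense in all fields.

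\textbf{Step 1 (separating components).} Decompose $B$ and $\widetilde B$ under $\ad A$, which acts semisimply with integer eigenvalues $0,\pm i$ on polynomial vector fields. The terms $S$, $R_{34},\dots$ and $D_n$ commute with $A=R_{12}$. The term $R_{23}$ has $\ad A$-weight $\pm1$ and is paired with $R_{13}$. Applying polynomials in $\ad A$ (Lagrange interpolation on the finitely many eigenvalues) to $B$ therefore yields $R_{23}$ and $R_{13}$ separately, which gives a copy of $\mathfrak{so}(3)$ on the $x_1x_2x_3$-coordinates. Then use the Casimir $\Omega=\ad R_{12}^2+\ad R_{13}^2+\ad R_{23}^2$ of this $\mathfrak{so}(3)$. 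The remaining summands $S$, $R_{34}+\cdots+R_{n,n+1}$ and $D_n$ lie in distinct $\Omega$-eigenspaces:
\begin{itemize}[label={}]
\item $S=x_3R_{12}$ is of $\mathfrak{so}(3)$-spin $1$ or $2$;
\item $R_{34}$ is spin $1$ in the first three variables times a rotation in $x_4$;
\item $R_{45}+\cdots$ is spin $0$;
\item $D_n$ is spin $1$ (for $n=2$) or spin $3$ (for $n\ge3$).
\end{itemize}
I would check this by explicit computation. Polynomial interpolation in $\Omega$, combined with $\ad A$, then isolates each summand. After this step the Lie algebra contains all $R_{ij}$, since $R_{34},R_{45},\dots$ together with $R_{12},R_{23}$ generate $\mathfrak{so}(n+1)$. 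It also contains $S$, and $D_n$ in case (a).

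\textbf{Step 2 (divergence-free fields).} Divergence-free polynomial fields decompose into $\mathfrak{so}(n+1)$-isotypic pieces: Killing fields, transverse (co-closed) vector harmonics of each degree $k$, and for $S^3$ the two curl eigenspaces. Since the algebra is $\mathfrak{so}(n+1)$-invariant, it suffices to produce a nonzero vector in each irreducible piece and then apply irreducibility. The degree-raising mechanism is the bracket with $S$ (and its rotates). This is a first-order operator that multiplies the degree by roughly one. Its projection from degree-$k$ transverse harmonics to degree-$(k+1)$ transverse harmonics must be shown to be nonzero, for example by evaluating on a highest-weight vector. For $n=2$ this is the Poisson algebra computation: $\{x_3\cdot,\,\cdot\}$ applied to the spherical harmonics $\mathrm{Re}(x_1+ix_2)^k$ produces degree $k+1$. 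For $n\ge4$ the transverse spaces are irreducible, so one nonzero projection per degree suffices. For $S^3$ the degree-$k$ transverse space splits as two irreducibles, $\mathrm{curl}=\pm(k+1)$. Here I would use $S$ together with its mirror image under an orientation-reversing rotation conjugation, which is available since $\mathfrak{so}(4)$ is present, to hit both chiralities. Finally, span of all degrees is $C^\infty$-dense by Sobolev convergence of harmonic expansions.

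\textbf{Step 3 (all fields) and the main obstacle.} Once all divergence-free fields are present, the brackets $[Z,D_n]$ with $Z$ divergence-free satisfy $\diver[Z,D_n]=Z(\diver D_n)=Z(\Delta h_n)$, which is a multiple of $Z(h_n)$. These span all mean-zero functions, since $h_n$ is nonconstant and the fields $Z$ are arbitrary divergence-free fields. Each field is a divergence-free field plus the gradient determined by its divergence, so the algebra contains divergence-free $\oplus$ gradient fields, which is everything. I expect the main obstacle to be Step 2: proving that the degree-raising projections are nonzero in every degree, and especially handling the two chiral components on $S^3$, where irreducibility alone does not suffice and an explicit highest-weight computation for each chirality is needed.
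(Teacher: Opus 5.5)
Your overall architecture matches the paper's: flow calculus reducing the problem to Lie-algebra density, Casimir separation of the components of $B$, degree raising by $S$ plus irreducibility, a chirality argument on $S^3$, and the divergence quotient. However, Step~3 rests on a false claim. The functions $\diver[Z,D_n]=c\,Z(h_n)$, with $Z$ divergence-free, do \emph{not} span the mean-zero functions. For any smooth $\phi$ with antiderivative $\Phi$,
$$\int_{S^n}\phi(h_n)\,Z(h_n)\,d\mu=\int_{S^n}Z\bigl(\Phi(h_n)\bigr)\,d\mu=0,$$
so every such divergence is $L^2$-orthogonal to every function of $h_n$. For example, $h_n^2$ minus its mean is orthogonal to all of them, and also to $\diver D_n$, since $h_n$ is odd in $x_3$. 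Iterated brackets are therefore indispensable, and they must use elements actually in the algebra, not arbitrary divergence-free fields. The correct formulation is that $\{\diver Y:Y\in\Lie(A,\widetilde B)\}$ is a module over the divergence-free part of the algebra, via $\diver[X,Y]=X(\diver Y)$, and you must show it contains every $\calH_d(S^n)$. The paper does this as a scalar copy of Step~2: rotations with irreducibility of $\calH_d(S^n)$, and raising via $S(a_d)=d\,x_3b_d$ where $u^d=a_d+ib_d$. Because the seed $h_n$ is cubic for $n\ge3$, it also needs an explicit descent to degrees $2$ and $1$. It gets this by applying $X_{x_1x_3}$ twice and extracting the lower-degree components with the scalar Casimir $\sum_{i<j}R_{ij}^2$. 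Your proposal has nothing playing this role.

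The $S^3$ mechanism is also wrong as stated. Everything generated by $\mathfrak{so}(4)$ acts through $SO(4)$, which commutes with $*d$ and so preserves each of $\calV_d^\pm$. An orientation-reversing conjugation is not available inside the Lie algebra. In any case every mirror image $(g\rho)_*S=g_*S$ of $S$ is already an $SO(4)$-rotate of $S$, because $S$ is fixed by $\rho(x)=(x_1,x_2,x_3,-x_4)$. That invariance is exactly what the paper uses. Since $\rho$ anticommutes with $*d$, it swaps $\calV_d^+$ and $\calV_d^-$, so a nonzero $\rho$-invariant $X\in\calV_d^{(3)}$ has both chiral components nonzero. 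Inequivalence of $\calV_d^\pm$ (a Schur argument you also omit) then forces the $\mathfrak{so}(4)$-module generated by $X$ to be all of $\calV_d^{(3)}$. No highest-weight computation is needed, and the argument applies in every degree because the raised fields do not involve $x_4$.

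Relatedly, the nonvanishing you leave open for $n\ge3$ becomes immediate once you embed the $S^2$ calculation via $X_f=(x_U\times\nabla_Uf,0,\dots,0)$ for $f=f(x_1,x_2,x_3)$. Then $[S,X_{a_d}]=d\,X_{x_3b_d}$ lies exactly in $\calV_{d+1}^{(n)}$.

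Two smaller points in Step~1. First, you need $S$ to be pure spin $2$, i.e.\ $\mathcal C S=-6S$; a spin-$1$ part could not be separated from $R_{34}$. Second, neither $\Omega$ nor $\ad A$ distinguishes the terms of $T_4=R_{45}+\cdots+R_{n,n+1}$, which all have weight $0$. You must peel them off using $[R_{j-1,j},[R_{j-1,j},T_j]]=-R_{j,j+1}$ before claiming all of $\mathfrak{so}(n+1)$.
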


The number of generators in Theorem~\ref{thm:main} is minimal.  Indeed, a subgroup generated by a 
single one-parameter group is abelian, and so is its closure, whereas both $\Diff_0(S^n)$ and $\SDiff_0(S^n)$ are nonabelian.

The dimension-dependent definition of $D_n$ is deliberate.  On $S^2$ the
linear gradient $\nabla x_3$ can be separated from the shear by the Casimir
operator of the recovered rotation algebra.  In dimensions $n\ge3$ a new
rotational term $R_{34}$ appears with the same Casimir eigenvalue $-2$ as
$\nabla x_3$.  A quadratic gradient has eigenvalue $-6$, which coincides with
the eigenvalue of the shear $S$.  The cubic harmonic above has eigenvalue
$-12$, the first new value, and therefore permits the required separation.
This is the only reason for using a cubic rather than the simpler linear
gradient that suffices on $S^2$.

The proof of Theorem~\ref{thm:main} is most transparent if one first allows
a somewhat larger, geometrically natural generating family.  Set
$$
  \mathcal E_n
  =
  \{R_{12},R_{23},\ldots,R_{n,n+1},S\}
\qquad\mbox{and}\qquad
  \widetilde{\mathcal E}_n
  =
  \mathcal E_n\cup\{D_n\}.
$$
We first prove the following intermediate result.

\begin{proposition}\label{prop:nplusone}
For every $n\ge2$:
\vspace{-5pt}
\begin{enumerate}[label=\textup{(\alph*)}]
\item
the flows of the vector fields in $\widetilde{\mathcal E}_n$ generate a
$C^\infty$-dense subgroup of $\Diff_0(S^n)$;
\item
the flows of the vector fields in $\mathcal E_n$ generate a
$C^\infty$-dense subgroup of $\SDiff_0(S^n)$.
\end{enumerate}
\end{proposition}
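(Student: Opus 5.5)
The reduction I would use is the standard one: if $\mathcal F$ is a family of complete smooth vector fields and the real Lie algebra $\Lie(\mathcal F)$ generated by $\mathcal F$ under brackets is $C^\infty$-dense in the Lie algebra $\calV$ of all smooth vector fields (respectively in the divergence-free fields $\calV_0$), then the group generated by the flows is $C^\infty$-dense in $\Diff_0(S^n)$ (respectively $\SDiff_0(S^n)$). The mechanism is the Trotter product and commutator formulas: $\exp(t[X,Y])$ is a $C^\infty$ limit of $\exp(\sqrt t X)\exp(\sqrt t Y)\exp(-\sqrt t X)\exp(-\sqrt t Y)$ products, and $\exp(t(X+Y))$ is a limit of $(\exp(tX/m)\exp(tY/m))^m$. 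Hence the closure $\overline G$ contains $\exp$ of every element of $\overline{\Lie(\mathcal F)}$. Any element of $\Diff_0$ or $\SDiff_0$ is the time-one map of a time-dependent field in $\calV$ or $\calV_0$, and that map is approximated by products of time-independent flows. So the whole proof is a Lie-algebra computation: show $\Lie(\mathcal E_n)$ contains every divergence-free field with polynomial coefficients, and $\Lie(\widetilde{\mathcal E}_n)$ every polynomial field. Polynomial (i.e.\ spherical harmonic) fields are $C^\infty$-dense.

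\textbf{Divergence-free part.} The rotations $R_{12},R_{23},\dots,R_{n,n+1}$ generate all of $\mathfrak{so}(n+1)$, since $[R_{i,i+1},R_{i+1,i+2}]=\pm R_{i,i+2}$ and so on. The adjoint action of $\mathfrak{so}(n+1)$ on $\calV_0$ preserves the decomposition into eigenspaces of the Hodge Laplacian, restricted to divergence-free fields (transverse vector harmonics). I would decompose $\calV_0=\bigoplus_k W_k$ by degree, where $W_k$ consists of the degree-$k$ fields.
\begin{itemize}[leftmargin=2em]
\item For $n\ge4$ each $W_k$ is an irreducible $\mathfrak{so}(n+1)$-module. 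For $n=3$ it splits into the two curl eigenspaces $W_k^\pm$.
\item $S=x_3R_{12}$ has a nonzero component in $W_2$, namely the projection of $S$ itself, since $x_3R_{12}$ is a degree-two Killing-type harmonic field. Applying $\ad(\mathfrak{so}(n+1))$ to this component fills $W_2$ by irreducibility.
\item The degree-raising step: bracket $S$ with elements of $W_k$ and show the result has a nonzero component in $W_{k+1}$. The bracket of a degree-2 field with a degree-$k$ field lands in degrees at most $k+1$. Lower-degree components are already in the algebra by induction, so they can be subtracted. It then suffices to exhibit one explicit element of $W_k$, for instance $(x_1+ix_2)^{k-1}R_{12}$-type fields, whose bracket with $S$ has nonzero top component. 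Irreducibility then gives all of $W_{k+1}$.
\item On $S^2$ everything is Hamiltonian. There $W_k$ corresponds to the degree-$k$ spherical harmonics, $S$ has Hamiltonian $x_3^2/2$ up to sign, and the step reduces to a Poisson bracket $\{x_3^2,\,(x_1+ix_2)^k\}$ being nonzero in top degree.
\item On $S^3$ I would run the same argument separately on each curl eigenspace. I would check that $S$ has nonzero components in both $W_2^+$ and $W_2^-$, and that brackets produce top-degree components in both chiralities; the mixing comes from $S$ not being an eigenvector of curl.
\end{itemize}

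\textbf{Full group.} Now $D_n=\nabla h_n$ is a gradient of a harmonic of degree $\ell=1$ or $3$. Its $\ad$-orbit gives all gradients of degree-$\ell$ harmonics. Bracketing gradients with divergence-free harmonic fields, and using the divergence map $X\mapsto\diver X$ (which intertwines the actions), I would show the image under $\diver$ of the generated algebra contains harmonics of every degree. Concretely, $\diver[X,\nabla f]=X(\Delta f)-\ldots$, and for $X$ divergence-free one gets $\diver[X,\nabla f]=X(\Delta f)=-\lambda X f$, which sweeps out all degrees by irreducibility of scalar harmonics. Every field is a divergence-free field plus a gradient, the gradient determined by its divergence. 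Thus once all gradient directions modulo $\calV_0$ appear, and $\calV_0$ is already contained, we get everything.

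\textbf{Main obstacle.} I expect the hardest step to be the nonvanishing of the top-degree component in the degree-raising bracket, done uniformly in $n$ and $k$, and its chiral version on $S^3$. My first attempt would be a highest-weight computation: take $X$ the highest weight vector of $W_k$ and compute the $W_{k+1}$ highest-weight coefficient of $[S,X]$ explicitly in the coordinate $z=x_1+ix_2$.
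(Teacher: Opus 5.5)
Your overall architecture matches the paper's: reduce to density of the generated Lie algebra via Trotter and commutator formulas plus time slicing, fill the transverse harmonics degree by degree using $\mathfrak{so}(n+1)$-irreducibility, treat $S^3$ by chirality, and finish with the divergence quotient. The gap is that the step you yourself flag as the main obstacle is never established. That step is a nonzero top-degree component of a bracket with $S$, uniformly in $n$ and $k$. The candidate you suggest also fails: $(x_1+ix_2)^{k-1}R_{12}$ is not in $W_k$, since with $u=x_1+ix_2$ one has $\diver\bigl(u^{k-1}R_{12}\bigr)=R_{12}(u^{k-1})=-i(k-1)u^{k-1}\neq0$ for $k\ge2$.

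The missing idea, which makes any highest-weight computation unnecessary, is that your $S^2$ Poisson calculation embeds verbatim in every $S^n$. Let $f$ be homogeneous harmonic of degree $d$ in $x_1,x_2,x_3$. Then the field $X_f=(x_U\times\nabla_U f,0,\dots,0)$ is tangent, divergence-free and componentwise harmonic, so $X_f\in\calV_d^{(n)}$. Moreover $[X_f,X_g]=X_{\{f,g\}}$ and $S=X_{x_3^2/2}$. Hence $[S,X_{\Re u^d}]=d\,X_{x_3\Im u^d}$, which is exactly a nonzero element of $\calV_{d+1}^{(n)}$. There are no lower-degree components to subtract and no bookkeeping with weights.

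On $S^3$ there is a second hole. Knowing that some element of the algebra has nonzero components in both $W_k^+$ and $W_k^-$ does not by itself let you ``run the same argument separately on each curl eigenspace.'' You need the two summands to be \emph{inequivalent}. If $W_k^+\cong W_k^-$ via some $\phi$, the mixed vector $w+\phi(w)$ would generate only the diagonal submodule $\{v+\phi(v)\}$. With inequivalence, the submodule generated by $X_++X_-$ with $X_\pm\neq0$ is all of $W_k^+\oplus W_k^-$.

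The paper also gets nonvanishing of both components for free, with no case-by-case check. The reflection $x_4\mapsto-x_4$ is orientation-reversing, so it anticommutes with $*d$ and interchanges $W_k^\pm$. It also fixes $S$ and every embedded field $X_f$.

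Finally, in the full-group part, ``sweeps out all degrees by irreducibility'' glosses over a real step for $n\ge3$. The seed $h_n$ is cubic, so you must exhibit some $X h_n$, with $X$ in the divergence-free algebra, that has nonzero components in degrees $2$ and $1$. You then have to isolate each such component before irreducibility applies. The scalar Casimir $\sum_{i<j}R_{ij}^2$ preserves the space of divergences and separates degrees, so it does the isolating. The paper carries this out with $X=X_{x_1x_3}$.
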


Part~\textup{(b)} is the harmonic-analytic core of the argument.  The
adjacent rotation fields generate the full rotation algebra
$\mathfrak{so}(n+1)$, while $S=x_3R_{12}$ is a degree-two transverse vector
spherical harmonic.  The Lie algebra generated by these fields contains one
nonzero transverse harmonic in each successive degree.  For $n=2$ and
$n\ge4$, irreducibility under $SO(n+1)$ then supplies the entire harmonic
space, while for $n=3$ an additional argument is required.  Spectral
completeness shows that finite sums of these harmonics are
$C^\infty$-dense among all smooth divergence-free vector fields.

The mechanism that raises the harmonic degree is essentially
three-dimensional.  A fixed coordinate subspace
$$
  \Span\{e_1,e_2,e_3\}\subset\R^{n+1}
$$
carries the same calculation that appears on $S^2$, and that calculation
survives inside the transverse vector harmonics in every dimension.  On
$S^2$ the picture is especially simple: every divergence-free vector field
is Hamiltonian, and the relevant bracket calculation becomes a Poisson
identity between scalar spherical harmonics.  The three-sphere is the only
representation-theoretic exception, reflecting the familiar decomposition
$$
  \mathfrak{so}(4)
  \cong
  \mathfrak{so}(3)_+\oplus\mathfrak{so}(3)_-.
$$
There each transverse harmonic space splits into two inequivalent
irreducible summands, and one must show that both summands occur.

Part~\textup{(a)} builds on part~\textup{(b)}.  The divergence-free fields
form the kernel in the exact sequence
$$
  0
  \longrightarrow
  \mathfrak X_{\diver=0}(S^n)
  \longrightarrow
  \mathfrak X(S^n)
  \overset{\diver}{\longrightarrow}
  C^\infty_0(S^n)
  \longrightarrow
  0,
$$
where $C^\infty_0(S^n)$ denotes the mean-zero smooth functions.  If $X$ is
divergence-free, then
$$
  \diver[X,Y]=X(\diver Y),
$$
so the already-generated divergence-free algebra acts on the divergences of
the enlarged Lie algebra.  For $n=2$,
$$
  \diver D_2=\Delta_{S^2}x_3=-2x_3,
$$
and the degree-raising calculation generates every nonconstant scalar
spherical harmonic from degree one.  For $n\ge3$, the cubic seed begins in
degree three; an explicit embedded $S^2$ calculation produces degrees two
and one, after which the same raising mechanism applies.  Thus the
divergences are dense in $C^\infty_0(S^n)$, and the divergence-free kernel
together with these quotient directions gives density in all smooth vector
fields.

The second step is finite-dimensional in spirit.  Although
$\mathcal E_n$ and $\widetilde{\mathcal E}_n$ contain several geometrically
natural fields, each family can be recovered from the corresponding pair in
Theorem~\ref{thm:main}.

\begin{proposition}\label{prop:two-recover}
For every $n\ge2$:
\vspace{-5pt}
\begin{enumerate}[label=\textup{(\alph*)}]
\item
the Lie algebra generated by $A$ and $\widetilde B$ contains every field in
$\widetilde{\mathcal E}_n$;
\item
the Lie algebra generated by $A$ and $B$ contains every field in
$\mathcal E_n$.
\end{enumerate}
\end{proposition}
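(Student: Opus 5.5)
The plan is to separate the components of $B$ (resp.\ $\widetilde B$) by a Casimir operator, exactly as the introduction suggests. Let $\mathfrak g$ be the Lie algebra generated by $A$ and $B$ (or $A$ and $\widetilde B$). Every field in sight is a sum of pieces that behave simply under the rotation algebra, and brackets with elements of $\mathfrak g$ stay in $\mathfrak g$. So the strategy has three stages: first recover a copy of $\mathfrak{so}(3)$ inside $\mathfrak g$; then apply its Casimir operator $\Omega=\sum(\ad R)^2$, which is a polynomial in $\ad$'s of elements of $\mathfrak g$; finally isolate each eigencomponent by a Vandermonde (Lagrange interpolation) argument.

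\textbf{Step 1: recover $\mathfrak{so}(3)$.} I would compute $[A,B]$ directly. Since $A=R_{12}$ commutes with $S=x_3R_{12}$ (because $R_{12}x_3=0$) and with $R_{j,j+1}$ for $j\ge3$, and $h_n$ is $R_{12}$-invariant (so $[A,D_n]=0$), we get
$$
[A,B]=[A,\widetilde B]=[R_{12},R_{23}]=\pm R_{13}.
$$
Then $[A,R_{13}]=\pm R_{23}$, so $R_{12},R_{13},R_{23}\in\mathfrak g$, giving the standard $\mathfrak{so}(3)$ on the first three coordinates. Let $\Omega=\sum_{i<j\le3}(\ad R_{ij})^2$; it maps $\mathfrak g$ into itself.

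\textbf{Step 2: diagonalize $\Omega$ on the summands of $B$ and $\widetilde B$.} Next I would check that each summand is an $\Omega$-eigenvector, or at least a sum of eigenvectors, for the $\mathfrak{so}(3)$ acting on $x_1,x_2,x_3$:
\begin{itemize}[label=$\cdot$]
\item $R_{23}$ lies in the adjoint representation, with eigenvalue $-2$.
\item $R_{34}$ transforms like the vector $(x_1,x_2,x_3)\partial_{x_4}$-type fields, hence also with eigenvalue $-2$.
\item $R_{j,j+1}$ for $j\ge4$ is $\mathfrak{so}(3)$-invariant, with eigenvalue $0$.
\item $S=x_3R_{12}$ has eigenvalue $-6$ (spin 2) after removing any spin-0 part; I need to verify $S$ is pure spin 2, which holds since $S$ is a transverse degree-two harmonic built from the traceless tensor.
\item $D_n=\nabla h_n$ with $h_n$ of degree $\ell$ has eigenvalue $-\ell(\ell+1)$, namely $-2$ for $n=2$ and $-12$ for $n\ge3$.
\end{itemize}
The rotations act on gradients through the function $h_n$, which is why $\nabla h_n$ inherits the eigenvalue of $h_n$.

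\textbf{Step 3: separate by interpolation.} Write $\widetilde B=\sum_\lambda B_\lambda$ by eigenvalue. Applying polynomials $p(\Omega)$ that vanish on all but one eigenvalue puts each $B_\lambda$ in $\mathfrak g$. Concretely:
\begin{itemize}[label=$\cdot$]
\item For $n\ge3$ this gives $S$ (eigenvalue $-6$), $D_n$ (eigenvalue $-12$), $R_{23}+R_{34}$ (eigenvalue $-2$), and $\sum_{j\ge4}R_{j,j+1}$ (eigenvalue $0$).
\item Subtracting $R_{23}$, already obtained, gives $R_{34}$.
\item For $n=2$, $R_{23}$ and $\nabla x_3$ share eigenvalue $-2$, but we already have $R_{23}$, so $D_2=(\text{eigenvalue }-2\text{ part})-R_{23}$.
\end{itemize}

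\textbf{Step 4: peel off the tail rotations.} It remains to extract the individual $R_{j,j+1}$, $j\ge4$, from $T=\sum_{j\ge4}R_{j,j+1}$. Having $R_{34}$ and $R_{23}$, brackets $[R_{34},T]=\pm R_{35}$ and $[R_{34},R_{35}]=\pm R_{45}$ give $R_{45}$. Then $T-R_{45}$ is handled inductively the same way, using $R_{45}$ in place of $R_{34}$. More simply, once $R_{12},\dots,R_{45}$ are known, repeat the bracket trick down the chain.

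The main obstacle is Step 2: justifying that $S$ and $D_n$ are pure $\Omega$-eigenvectors with the stated eigenvalues, and that the tail sum commutes appropriately. I would first verify these by direct bracket computation on low-degree polynomial fields, rather than relying on representation theory. Part (b) is the same argument with the $D_n$ term absent.
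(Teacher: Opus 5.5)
Your proposal is correct and is essentially the paper's argument. You bracket with $A$ to recover $R_{13},R_{23}$, filter with the Casimir of that $\mathfrak{so}(3)$ using the same eigenvalues ($-12,-6,-2,0$, or $-2,-6$ on $S^2$), and peel off the tail rotations by double brackets. The one cosmetic difference is ordering: you filter $\widetilde B$ before subtracting $R_{23}$, so $R_{34}$ comes from the $-2$ eigencomponent and your peeling starts at $R_{34}$. The paper instead subtracts $R_{23}$ first and starts the peeling from $R_{23}$ and $T_3$.
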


In both cases the first brackets recover the rotations in the first three
coordinates.  Let
$$
  \mathcal C
  =
  \ad_{R_{12}}^2+\ad_{R_{13}}^2+\ad_{R_{23}}^2
$$
be the Casimir operator for this copy of $\mathfrak{so}(3)$.  For $n=2$,
after subtracting the recovered $R_{23}$ from $\widetilde B$, the two
remaining components $D_2=\nabla x_3$ and $S$ have Casimir eigenvalues $-2$
and $-6$.  For $n\ge3$, the components of
$$
  \widetilde B-R_{23}
  =
  D_n+S+R_{34}+\Bigl(R_{45}+\cdots+R_{n,n+1}\Bigr)
$$
have eigenvalues
$-12$, $-6$, $-2$, and 0,
respectively.  Polynomial filtering therefore recovers $D_n$ and $S$.
Once $S$ has been extracted, the adjacent rotations can be recovered
successively.  Part~\textup{(b)} is the same argument with the gradient
component omitted; the relevant eigenvalues are then $-6,-2,0$.

Propositions~\ref{prop:nplusone} and \ref{prop:two-recover} nearly prove
Theorem~\ref{thm:main} by themselves.  One additional standard observation
is needed: if a vector field belongs to the Lie algebra generated by a
finite collection of fields, then for every finite $k$ its flow belongs to
the $C^k$ closure of the subgroup generated by their flows.  This follows
from Lie--Trotter and commutator product formulas.  Time slicing then passes
from autonomous to time-dependent vector fields, and the final passage from
finite $C^k$ approximation to the $C^\infty$ topology is made at the end of
the proof.  Applied to Proposition~\ref{prop:nplusone}, this yields both
parts of Theorem~\ref{thm:main}.

There is substantial related work on finite generation and approximate
controllability by flows.  Agrachev and Sarychev developed the
Lie-algebraic approximation mechanism in diffeomorphism groups and, on
$S^2$, obtained finite generating families expressed in spherical
harmonics \cite{AgrachevSarychev2008,AgrachevSarychev2022}.  Very recently,
Jiang and Chen proved that the Lie algebra of all algebraic vector fields on
$S^2$ is generated by two explicit vector fields, and consequently obtained
two-flow approximation of $\Diff_0(S^2)$ \cite{JiangChen2026}.  Thus they
have priority for the two-generator theorem for the full diffeomorphism
group of $S^2$; their result is also stronger at the algebraic level, since
their two fields generate the algebraic vector-field Lie algebra exactly.
They announce a forthcoming extension of their algebraic result to spheres
in arbitrary dimension.

The constructions are closely related on $S^2$.  In both, one generator is
a rotation and the other packages together Hamiltonian and gradient
components that can be separated spectrally.  The present proof is organized
so that the divergence-free kernel is generated first and the remaining
directions are then recovered through the divergence quotient.  This
organization persists in every dimension and simultaneously yields the
volume-preserving statement: deleting the gradient component from
$\widetilde B$ gives the divergence-free pair $A,B$ in
Theorem~\ref{thm:main}\textup{(b)}.

The remainder of the paper follows this structure.  We first develop the
transverse harmonic decomposition and prove the divergence-free part of
Proposition~\ref{prop:nplusone}, beginning with the Hamiltonian argument on
$S^2$, then embedding that mechanism into the generic case $n\ge4$, and
finally treating the exceptional case $S^3$.  We next add the gradient
component and use the divergence quotient to obtain all smooth vector fields.
After carrying out the dimension-independent passage from vector fields to
their flows, we prove Proposition~\ref{prop:two-recover} by Casimir
separation and successive recovery of the rotational tail.  This completes
both parts of Theorem~\ref{thm:main}.

\section{Rotations, shears, and vector spherical harmonics}

Theorem~\ref{thm:main} has two parts, but the proof of the
full-diffeomorphism statement begins with the volume-preserving one.  The
divergence-free vector fields form the kernel of the divergence map, and the
fields $R_{ij}$ and $S$ all lie in this kernel.  We therefore begin with
the Lie algebra
$$
  \calL_n
  =
  \Lie\{R_{12},R_{23},\ldots,R_{n,n+1},S\},
  \qquad
  S=x_3R_{12},
$$
where $\Lie$ denotes the Lie algebra of smooth vector fields generated by
the indicated fields.  In this and the next three sections we show that
$\calL_n$ is $C^\infty$-dense in the smooth divergence-free vector
fields on $S^n$.  This proves the infinitesimal part of
Proposition~\ref{prop:nplusone}\textup{(b)} and supplies the kernel needed for
part~\textup{(a)}.  The passage from vector fields to their flows will be
carried out afterward.

Only then do we add the gradient field $D_n$.  If $X$ is
divergence-free, then
$$
  \diver[X,Y]
  =
  X(\diver Y),
$$
so the already-generated divergence-free algebra acts on the divergences of
the enlarged Lie algebra.  The remaining problem can therefore be reduced to
generation of scalar spherical harmonics in the quotient
$C^\infty_0(S^n)$.

We will repeatedly regard homogeneous polynomial vector fields on $\mathbb R^{n+1}$ as vector fields on $S^n$ by restriction. For such a field $P$ that is tangent to the sphere, its ambient and intrinsic divergences agree on $S^n$. Indeed, with $\nu=x$ the outward unit normal,
$$
\operatorname{div}_{\mathbb R^{n+1}}P
=
\operatorname{div}_{S^n}(P|_{S^n})
+
\langle D_\nu P,\nu\rangle .
$$
If $P$ is homogeneous of degree $d$, Euler's identity gives $D_\nu P=dP$ on $S^n$, and tangency implies
$$
\langle D_\nu P,\nu\rangle=d\langle P,\nu\rangle=0.
$$
Thus, for the homogeneous tangent fields considered below, ambient and intrinsic divergence coincide on the sphere; we will therefore simply write $\diver$, rather than $\diver_{\mathbb R^{n+1}}$ or $\diver_{S^n}$.

The ambient field $R_{ij}$ is tangent to every sphere, Killing, and
divergence-free.  The ``adjacent'' fields
$$
  R_{12},R_{23},\ldots,R_{n,n+1}
$$
generate the full Lie algebra $\mathfrak{so}(n+1)$, since their iterated
brackets produce every $R_{ij}$.  Consequently
$
  \mathfrak{so}(n+1)\subset\calL_n.
$

The shear
$
  S=x_3R_{12}
$
has ambient expression
$$
  S(x)
  =
  (x_2x_3,-x_1x_3,0,\ldots,0).
$$
Hence
$$
  \langle x,S(x)\rangle=0,
  \qquad
  \diver S=0,
  \qquad
  \Delta S=0
$$
componentwise.  Thus $S$ is tangent, divergence-free, and homogeneous
harmonic of degree two.

For $d\ge1$, let $\calV_d^{(n)}$ denote the space of restrictions to $S^n$
of homogeneous polynomial vector fields
$
  P:\R^{n+1}\longrightarrow\R^{n+1}
$
of degree $d$ satisfying
$$
  \Delta P=0,
  \qquad
  \diver P=0,
  \qquad
  \langle x,P(x)\rangle=0.
$$
The last condition says that $P$ is tangent to the sphere.  

The vector fields in $\calV_d^{(n)}$ are the transverse vector spherical harmonics of degree $d$.
The
first space is
$$
  \calV_1^{(n)}
  =
  \{Ax:A\in\mathfrak{so}(n+1)\},
$$
so degree one consists precisely of the infinitesimal rotations.  In
particular,
$
  \calV_1^{(n)}\subset\calL_n,
$
while the shear $S$ is a nonzero element of $\calV_2^{(n)}$.

Under the metric identification of vector fields with one-forms, a tangent
vector field is divergence-free precisely when the corresponding one-form is
coclosed.  Since $H^1(S^n;\R)=0$ for $n\ge2$, the coclosed one-forms are
exactly the coexact one-forms.  The Hodge Laplacian preserves this subspace.
The classical calculation of its spectrum on the round sphere identifies
$\calV_d^{(n)}$ with the coexact one-form eigenspace having eigenvalue
$$
  \lambda_d=(d+1)(d+n-2),
  \qquad d=1,2,\ldots;
$$
see \cite{IkedaTaniguchi,RubinOrdonez}.

We record explicitly the completeness statement that will be used
throughout the proof.

\begin{lemma}[Spectral completeness]\label{lem:spectral-completeness}
Every smooth divergence-free vector field $X$ on $S^n$ has an expansion
$$
  X=\sum_{d=1}^\infty X_d,
  \qquad X_d\in\calV_d^{(n)},
$$
which converges in $C^k$ for every finite $k$, and hence in the
$C^\infty$ topology.  In particular, finite sums of transverse vector
spherical harmonics are $C^\infty$-dense in the smooth divergence-free
vector fields.
\end{lemma}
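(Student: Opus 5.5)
We will prove Lemma~\ref{lem:spectral-completeness} by identifying divergence-free fields with coexact one-forms and expanding in eigenforms of the Hodge Laplacian. This is a self-adjoint elliptic operator on the round sphere, so its eigenforms give an $L^2$-orthonormal basis. Under the metric identification $X\mapsto X^\flat$, a smooth divergence-free field $X$ corresponds to a smooth coclosed, hence coexact (since $H^1(S^n;\R)=0$), one-form $\omega$. The Hodge decomposition of $L^2$ one-forms is an orthogonal sum of exact, coexact, and harmonic pieces, and on $S^n$ the harmonic piece vanishes. The Laplacian $\Delta_H=d\delta+\delta d$ commutes with $d$ and $\delta$, so it preserves the closure of the coexact forms. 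Its restriction there has discrete spectrum $\lambda_d=(d+1)(d+n-2)$, with eigenspaces identified with $\calV_d^{(n)}$ by the cited classical computation.

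We then let $X_d$ be the $L^2$-orthogonal projection of $X$ onto $\calV_d^{(n)}$. The eigenspaces $\calV_d^{(n)}$, $d\ge1$, together with the exact eigenforms, exhaust the $L^2$ spectral decomposition. Since $\omega$ is orthogonal to all exact forms, we get $X=\sum_d X_d$ in $L^2$.

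To upgrade to $C^k$ convergence we use the usual Sobolev argument.
\begin{itemize}
\item Because $X$ is smooth, $\Delta_H^m\omega$ is smooth for every $m$. Self-adjointness gives $\langle \Delta_H^m\omega,\phi\rangle=\lambda_d^m\langle\omega,\phi\rangle$ for $\phi\in\calV_d^{(n)}$, hence
$$
\|X_d\|_{L^2}\le \lambda_d^{-m}\|\Delta_H^m X^\flat\|_{L^2}.
$$
\item Elliptic regularity, or simply the eigenvalue equation combined with the Sobolev embedding $H^{s}\subset C^k$ for $s>k+n/2$, gives $\|X_d\|_{C^k}\le C_k\lambda_d^{s/2}\|X_d\|_{L^2}$ for elements of a single eigenspace. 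Here $C_k$ is independent of $d$, and we take $s$ to be an even integer.
\item Since $\lambda_d\sim d^2$, choosing $m$ large makes $\sum_d \|X_d\|_{C^k}$ converge.
\end{itemize}
The series is therefore absolutely convergent in $C^k$. Its $C^k$ limit agrees with its $L^2$ limit $X$, so it converges to $X$ in $C^k$ for every $k$, and hence in $C^\infty$. Density of the finite sums follows at once.

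The main obstacle is the bookkeeping that the coexact eigenspace with eigenvalue $\lambda_d$ is exactly $\calV_d^{(n)}$, with no additional coexact eigenvalues. Equivalently, we need the polynomial description to capture the entire coexact spectrum. We take this from the cited classical results \cite{IkedaTaniguchi,RubinOrdonez} as the paper does. The only remaining check is that the eigenspaces are distinct for distinct $d$, which holds because $\lambda_d$ is strictly increasing in $d$. The estimates themselves are routine.
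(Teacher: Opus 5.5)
Your proposal is correct and follows essentially the same route as the paper. Both identify divergence-free fields with coexact one-forms (using $H^1(S^n;\R)=0$), rely on the cited identification of the coexact eigenspaces with $\calV_d^{(n)}$, and upgrade $L^2$ convergence to $C^k$ convergence via Sobolev embedding. The only cosmetic difference is that you bound each $\|X_d\|_{C^k}$ separately and sum absolutely, whereas the paper shows directly that the spectral tails tend to zero in every Sobolev norm.
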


\begin{proof}
The Hodge Laplacian on one-forms on the compact sphere is elliptic and
self-adjoint, so its eigenforms form a complete orthonormal basis of $L^2$.
Hodge decomposition and $H^1(S^n;\R)=0$ identify the divergence-free
sector with the coexact sector, whose eigenspaces are precisely the
$\calV_d^{(n)}$ described above.

Let $\alpha$ be the one-form corresponding to a smooth divergence-free
field and write $\alpha=\sum_d\alpha_d$ for its coexact spectral
expansion.  For every integer $N\ge0$, smoothness of $\alpha$ implies
$(1+\Delta)^N\alpha\in L^2$, and therefore
$$
  \sum_d(1+\lambda_d)^{2N}\|\alpha_d\|_{L^2}^2<\infty.
$$
It follows that the spectral tails tend to zero in every Sobolev norm.
Sobolev embedding then gives convergence in every $C^k$ norm.  Since this
holds for all finite $k$, the convergence is $C^\infty$.
\end{proof}

The representation theory has one important dimension-dependent feature.
For $n\ge4$, $\calV_d^{(n)}$ is an irreducible $SO(n+1)$-module; in highest-weight, or equivalently 
Young-diagram, language it is the trace-free hook representation of type $(d,1)$; see \cite[\text{Theorem 6.8}]{IkedaTaniguchi}.  

On $S^3$, $\calV_d^{(3)}$ splits into two inequivalent irreducible summands, while on $S^2$, $\calV_d^{(2)}$ identifies with the scalar harmonic representation. We begin with $S^2$, where the degree-raising mechanism takes its simplest form in the Hamiltonian/Poisson language. We then embed that calculation into higher-dimensional spheres; for $n\ge4$, irreducibility completes the argument directly, while $S^3$ requires a separate treatment because of the splitting into two curl eigenspaces.


\section{The two-sphere: Hamiltonians and Poisson brackets}

We begin with $S^2$, where the degree-raising mechanism underlying the proof takes its 
simplest form. Let $\omega$ be the standard area form on $S^2$.
If $Y$ is divergence-free, then
$$
  0=\mathcal L_Y\omega=d(\iota_Y\omega).
$$
Since
$
  H^1(S^2;\R)=0,
$
there is a smooth function $f$, unique up to an additive constant, such that
$
  \iota_Y\omega=-df.
$
Thus every divergence-free vector field is Hamiltonian:
$
  Y=X_f.
$

In this section, we write $(x,y,z)=(x_1,x_2,x_3)$. For a smooth extension of $f$ to $\R^3$,
$
  X_f(p)=p\times\nabla f(p).
$
The corresponding Poisson bracket is
$$
  \{f,g\}
  =
  X_f(g)
  =
  \langle p,\nabla f\times\nabla g\rangle,
$$
and
$
  [X_f,X_g]=X_{\{f,g\}}.
$

In particular,
$$
  \{x,y\}=z,
  \qquad
  \{y,z\}=x,
  \qquad
  \{z,x\}=y.
$$

Let $\calH_d$ denote the space of scalar spherical harmonics of degree $d$.
Degree by degree, the Hamiltonian correspondence gives an
$SO(3)$-equivariant isomorphism
$$
  \calH_d
  \longrightarrow
  \calV_d^{(2)},
  \qquad
  f\longmapsto X_f.
$$
Thus on $S^2$ the transverse vector-harmonic decomposition is equivalent
to the usual scalar spherical-harmonic decomposition.

For $n=2$ our generating family is
$$
\mathcal E_2=\{R_{12},R_{23},S\},
$$
and hence
$$
\calL_2=\Lie\{R_{12},R_{23},S\}.
$$
The rotation fields $R_{12}$ and $R_{23}$ have Hamiltonians $z$ and $x$,
respectively.  Their bracket gives the rotation with Hamiltonian $y$, so
$
  \calV_1^{(2)}\subset\calL_2.
$
The shear $S=zR_{12}$ has Hamiltonian $z^2/2$. On the sphere this differs by a 
constant from one-half of the nonzero quadratic harmonic
$$
z^2-\frac13
=
z^2-\frac13(x^2+y^2+z^2).
$$
Thus $\calL_2\cap\calV_2^{(2)}$ is nonzero.
Since $\calL_2$ contains the infinitesimal rotations, 
this intersection is an $\mathfrak{so}(3)$-submodule of $\calV_2^{(2)}$. Irreducibility of $\calH_2$, or 
equivalently of $\calV_2^{(2)}$, therefore gives
$
\calV_2^{(2)}\subset\calL_2.
$

The key degree-raising calculation is equally simple. Put $u=x+iy$, and write
$$
u^d=a_d+ib_d,
\qquad
a_d=\Re u^d,\quad b_d=\Im u^d.
$$
Since $\{z,u\}=-iu$,
$$
\left\{\frac12z^2,u^d\right\}=-id\,zu^d.
$$
Equating real and imaginary parts gives
$$
\left\{\frac12z^2,a_d\right\}=d\,zb_d,
\qquad
\left\{\frac12z^2,b_d\right\}=-d\,za_d.
$$
Moreover, $zu^d$ is homogeneous harmonic of degree $d+1$, since
$$
\Delta(zu^d)
=
z\Delta(u^d)+2\frac{\partial u^d}{\partial z}
=0.
$$
Thus $za_d$ and $zb_d$ are real spherical harmonics of degree $d+1$.

\begin{proposition}\label{prop:two-sphere}
For every $d\ge1$,
$
  \calV_d^{(2)}\subset\calL_2.
$
\end{proposition}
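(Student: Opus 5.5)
We argue by induction on $d$. The cases $d=1,2$ were established above: $\calV_1^{(2)}\subset\calL_2$ because the Hamiltonians $x,y,z$ are generated, and $\calV_2^{(2)}\subset\calL_2$ by irreducibility. It is convenient to phrase everything through the Hamiltonian isomorphism $\calH_d\to\calV_d^{(2)}$, $f\mapsto X_f$. Under it, $\calL_2$ corresponds to a Lie subalgebra of $(C^\infty(S^2),\{\cdot,\cdot\})$ modulo constants, and $[X_f,X_g]=X_{\{f,g\}}$. The claim is then that the Poisson algebra generated by $x$, $z$, and $\tfrac12 z^2$ contains every $\calH_d$ modulo constants.

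For the inductive step, assume $\calV_d^{(2)}\subset\calL_2$ for some $d\ge1$. Then the Hamiltonian vector field of $a_d=\Re u^d\in\calH_d$ lies in $\calL_2$, and so does $S=X_{z^2/2}$. By the displayed identity $\{\tfrac12z^2,a_d\}=d\,zb_d$ we get $[S,X_{a_d}]=X_{\{z^2/2,a_d\}}=d\,X_{zb_d}\in\calL_2$. As shown just before the statement, $zb_d$ is a homogeneous harmonic polynomial of degree $d+1$. We must check that it is nonzero on $S^2$: $b_d=\Im(x+iy)^d$ is a nonzero harmonic polynomial for $d\ge1$ (for example, $b_d$ is nonzero at $(\cos\tfrac{\pi}{2d},\sin\tfrac{\pi}{2d},0)$), so $zb_d$ is a nonzero homogeneous polynomial. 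Its restriction to the sphere is therefore nonzero by homogeneity. Hence $X_{zb_d}$ is a nonzero element of $\calV_{d+1}^{(2)}$ lying in $\calL_2$.

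It remains to pass from one nonzero vector to the whole space. Since $\mathfrak{so}(3)=\calV_1^{(2)}\subset\calL_2$ and $\calL_2$ is a Lie algebra, $\calL_2\cap\calV_{d+1}^{(2)}$ is invariant under $\ad_{R_{ij}}$. This uses the fact that the bracket of a Killing field with an element of $\calV_{d+1}^{(2)}$ stays in $\calV_{d+1}^{(2)}$, by $SO(3)$-equivariance of the defining conditions, or equivalently because $\{x_i,\cdot\}$ preserves $\calH_{d+1}$. So $\calL_2\cap\calV_{d+1}^{(2)}$ is a nonzero $\mathfrak{so}(3)$-submodule of $\calV_{d+1}^{(2)}\cong\calH_{d+1}$. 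The space $\calH_{d+1}$ is irreducible under $SO(3)$, and since $SO(3)$ is connected it is also irreducible under $\mathfrak{so}(3)$. Hence the intersection is all of $\calV_{d+1}^{(2)}$, which completes the induction.

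No step is seriously difficult. The only points needing care are the nonvanishing of the degree-raised Hamiltonian, which is handled by homogeneity, and the infinitesimal form of irreducibility used in the final step. The latter is standard: a subspace invariant under the Lie algebra is invariant under its exponentials, which generate $SO(3)$.
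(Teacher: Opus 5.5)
Your proposal is correct and follows the paper's proof essentially step for step. You use induction via $[S,X_{a_d}]=d\,X_{zb_d}$ to produce a nonzero element of $\calL_2\cap\calV_{d+1}^{(2)}$, and then irreducibility of $\calV_{d+1}^{(2)}\cong\calH_{d+1}$ under the rotations already in $\calL_2$; your extra checks (that $zb_d\neq0$ on $S^2$, and that $\mathfrak{so}(3)$-invariance suffices because $SO(3)$ is connected) fill in details the paper leaves implicit.
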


\begin{proof}
We have already established the result in degrees one and two.  
Suppose inductively that $\calV_d^{(2)}\subset\calL_2$. The Hamiltonian fields $X_{a_d}$ and $X_{b_d}$ therefore belong to $\calL_2$. Since $S=X_{z^2/2}$, the identities above give
$$
[S,X_{a_d}]=dX_{zb_d},
\qquad
[S,X_{b_d}]=-dX_{za_d}.
$$
In particular, $\calL_2\cap\calV_{d+1}^{(2)}$ is nonzero. Since $\calL_2$ contains the infinitesimal rotations, this intersection is an $\mathfrak{so}(3)$-submodule of $\calV_{d+1}^{(2)}$. Irreducibility therefore gives
$
\calV_{d+1}^{(2)}\subset\calL_2.
$
\end{proof}

Proposition~\ref{prop:two-sphere} and
Lemma~\ref{lem:spectral-completeness} therefore imply that $\calL_2$ is
$C^\infty$-dense in the space of smooth divergence-free vector fields on
$S^2$.

In this Hamiltonian picture, the motivation for using the linearly modulated
shear becomes particularly transparent: it is simply the Hamiltonian flow
associated with the elementary quadratic function $z^2/2$, the simplest
nonlinear Hamiltonian beyond the rotational ones.

The calculation above is not peculiar to $S^2$.  In the next section we embed it into the first three coordinate 
directions of $S^n$; for $n\ge4$, irreducibility of the transverse harmonic spaces then turns this three-dimensional 
degree-raising mechanism into the full generation result.

\section{The generic case: $n\ge4$}

The degree-raising calculation on $S^2$ can be embedded directly into
every higher-dimensional sphere.  Fix the three-dimensional coordinate
subspace
$
  U=\Span\{e_1,e_2,e_3\}\subset\R^{n+1}.
$

We will use $U\cap S^n$ as the coordinate two-sphere on which the Hamiltonian 
calculation takes place.  Write
$
  x_U=(x_1,x_2,x_3).
$
If $f(x_U)=f(x_1,x_2,x_3)$ is a homogeneous polynomial on
$U\cong\R^3$, we continue to write $X_f$ for the corresponding
three-dimensional Hamiltonian field, extended trivially in the remaining
coordinates:
$$
  X_f(x)
  =
  \bigl(x_U\times\nabla_U f(x_U),0,\ldots,0\bigr).
$$

\begin{lemma}\label{lem:embedded}
If $f$ is homogeneous harmonic of degree $d$ on $\R^3$, then
$
  X_f\in\calV_d^{(n)}.
$

For arbitrary smooth functions $f$ and $g$ of $x_U$,
$$
  [X_f,X_g]
  =
  X_{\{f,g\}},
$$
where
$$
  \{f,g\}
  =
  \langle x_U,\nabla_U f\times\nabla_U g\rangle
$$
is the same three-dimensional Poisson bracket used on $S^2$.
\end{lemma}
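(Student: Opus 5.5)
The plan is to verify the four defining conditions of $\calV_d^{(n)}$ for $X_f$ directly in ambient coordinates. Then the bracket identity follows from the fact that everything is happening on the factor $U$, where it reduces to the three-dimensional computation already used on $S^2$.

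\textbf{Membership in $\calV_d^{(n)}$.} Write $X_f=(x_U\times\nabla_U f,0,\ldots,0)$, which depends only on $x_U$. Its components are of the form $x_j\partial_k f-x_k\partial_j f$ with $j,k\le3$, so they are homogeneous of degree $d$, since $f$ has degree $d$ and $\partial f$ has degree $d-1$.

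Tangency holds because $\langle x,X_f(x)\rangle=\langle x_U,x_U\times\nabla_U f\rangle=0$, the last $n-2$ components of $X_f$ being zero.

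For the divergence, only the first three coordinates contribute:
$$
\diver X_f=\diver_U(x_U\times\nabla_U f)=\langle\nabla_U f,\operatorname{curl}x_U\rangle-\langle x_U,\operatorname{curl}\nabla_U f\rangle=0.
$$

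For harmonicity, each component depends only on $x_U$, so the Laplacian in $\R^{n+1}$ is the Laplacian in $\R^3$. Using $\Delta(x_jg)=x_j\Delta g+2\partial_j g$, we get
$$
\Delta(x_j\partial_k f-x_k\partial_j f)=x_j\partial_k\Delta f-x_k\partial_j\Delta f+2(\partial_j\partial_k f-\partial_k\partial_j f)=0.
$$
Hence $X_f\in\calV_d^{(n)}$, with nonvanishing implicit for $f\neq0$; no nonvanishing is claimed anyway.

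\textbf{Bracket identity.} The fields $X_f$ and $X_g$ have components only in the $e_1,e_2,e_3$ directions, and their coefficients depend only on $x_U$. Their Lie bracket as vector fields on $\R^{n+1}$ is therefore the trivial extension of the bracket of the three-dimensional fields $x_U\times\nabla_U f$ and $x_U\times\nabla_U g$ on $\R^3$.

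On $\R^3$ these are the standard ambient extensions of the Hamiltonian fields. They are tangent to every sphere centred at the origin, and their bracket restricted to each sphere $|x_U|=r$ is the Hamiltonian field of $\{f,g\}$ with respect to a rescaled area form. To avoid tracking scalings, I would instead verify the identity
$$
[x\times\nabla f,\;x\times\nabla g]=x\times\nabla\langle x,\nabla f\times\nabla g\rangle
$$
directly on $\R^3$. Both sides are derivations, so it suffices to evaluate them on the coordinate functions. Note that $(x\times\nabla f)(h)=\langle x,\nabla f\times\nabla h\rangle$ is the Poisson bracket $\{f,h\}$ extended to $\R^3$ by the same formula, which is a Lie–Poisson bracket on $\mathfrak{so}(3)^*\cong\R^3$. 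Since that bracket satisfies the Jacobi identity on $\R^3$, we have
$$
[X_f,X_g]h=\{f,\{g,h\}\}-\{g,\{f,h\}\}=\{\{f,g\},h\}=X_{\{f,g\}}h,
$$
with the sign convention $X_f(h)=\{f,h\}$ matching the paper's.

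\textbf{Main obstacle.} The only nontrivial point is justifying the Jacobi identity for $\langle x,\nabla f\times\nabla g\rangle$ on all of $\R^3$, as opposed to only on $S^2$. I would cite the fact that this is the Lie–Poisson structure with $\{x_i,x_j\}=\epsilon_{ijk}x_k$, and that Jacobi for a bivector linear in the coordinates reduces to Jacobi for the $\mathfrak{so}(3)$ structure constants. Alternatively, one can argue on each sphere $|x_U|=r$, where the bracket is $r$ times the symplectic Poisson bracket and Jacobi holds, and extend to $r=0$ by continuity.
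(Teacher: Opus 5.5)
Your proposal is correct and follows the paper's proof. Both check tangency, vanishing divergence, and the same componentwise Laplacian identity, and both reduce the bracket to the three-dimensional Poisson calculus in the first three coordinates. Your Lie--Poisson/Jacobi argument on all of $\R^3$ is a worthwhile addition: on $S^n$ the point $x_U$ ranges over the closed unit ball rather than only the unit $S^2$, a point the paper passes over when it simply cites the $S^2$ identity.
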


\begin{proof}
Suppose first that $f$ is homogeneous harmonic of degree $d$.  Tangency
follows from
$$
  \langle x,X_f(x)\rangle
  =
  \langle x_U,x_U\times\nabla_Uf\rangle
  =0.
$$
The divergence vanishes because
$
  \diver(x_U\times\nabla_Uf)=0.
$
The components of $X_f$ in the first three coordinate directions are
$$
x_2\frac{\partial f}{\partial x_3}
-
x_3\frac{\partial f}{\partial x_2},
\qquad
x_3\frac{\partial f}{\partial x_1}
-
x_1\frac{\partial f}{\partial x_3},
\qquad
x_1\frac{\partial f}{\partial x_2}
-
x_2\frac{\partial f}{\partial x_1}.
$$
Since
$$
\Delta\left(
x_j\frac{\partial f}{\partial x_k}
-
x_k\frac{\partial f}{\partial x_j}
\right)
=
2\frac{\partial^2f}{\partial x_j\partial x_k}
-2\frac{\partial^2f}{\partial x_k\partial x_j} 
+x_j\frac{\partial}{\partial x_k}\Delta f
-x_k\frac{\partial}{\partial x_j}\Delta f
=0,
$$
we have $X_f\in\calV_d^{(n)}$.

The bracket identity does not require harmonicity.  Since both vector fields
act only in the first three coordinates, their Lie bracket is exactly the
Hamiltonian bracket from the preceding $S^2$ calculation, acting in those
coordinates.  Hence
$
  [X_f,X_g]=X_{\{f,g\}}.
$
\end{proof}

The degree-raising identity from $S^2$ therefore survives unchanged in the
embedded copy.  Put
$$
  u=x_1+ix_2,
  \qquad
  z=x_3,
  \qquad
  q=\frac12z^2.
$$
Although $q$ itself is not harmonic, its Hamiltonian field on $U$, extended by 
zero on $U^\perp$, is precisely our linearly modulated shear:
$$
X_q=zR_{12}=S.
$$
As in the two-dimensional case,  write
$$
u^d=a_d+ib_d,
\qquad
a_d=\Re u^d,\quad b_d=\Im u^d.
$$
The polynomials $a_d$ and $b_d$ are homogeneous harmonic polynomials of degree $d$, so Lemma~\ref{lem:embedded} gives
$$
X_{a_d},X_{b_d}\in\calV_d^{(n)}.
$$
The Poisson identities obtained on $S^2$,
$$
\left\{\frac12z^2,a_d\right\}=d\,zb_d,
\qquad
\left\{\frac12z^2,b_d\right\}=-d\,za_d,
$$
therefore give
$$
[S,X_{a_d}]=dX_{zb_d},
\qquad
[S,X_{b_d}]=-dX_{za_d}.
$$
As we saw in the preceding section, $za_d$ and $zb_d$ are homogeneous harmonic polynomials of degree $d+1$. Hence the two vector fields on the right belong to $\calV_{d+1}^{(n)}$.

\begin{proposition}\label{prop:generic}
If $n\ge4$, then
$
  \calV_d^{(n)}\subset\calL_n$
 for every $d\ge1$.
\end{proposition}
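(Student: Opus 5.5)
The plan is induction on $d$, exactly parallel to Proposition~\ref{prop:two-sphere}, with irreducibility of $\calV_d^{(n)}$ for $n\ge4$ replacing irreducibility of $\calH_d$.

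\textbf{Base case and invariance.} For $d=1$ we already have $\calV_1^{(n)}=\{Ax:A\in\mathfrak{so}(n+1)\}\subset\calL_n$, because the adjacent rotations generate $\mathfrak{so}(n+1)$. The key structural fact is that, for each $d$, the intersection $\calL_n\cap\calV_d^{(n)}$ is an $\mathfrak{so}(n+1)$-submodule of $\calV_d^{(n)}$. It is a linear subspace, and it is closed under $\ad_{R_{ij}}$: $\calL_n$ is a Lie algebra containing every $R_{ij}$, and $\calV_d^{(n)}$ is preserved by $\ad_{R_{ij}}$. This second point holds because $[R_{ij},P]$ is the infinitesimal action of a one-parameter rotation group on $P$, and each of the three defining conditions of $\calV_d^{(n)}$ is $O(n+1)$-invariant: degree-$d$ homogeneity, $\Delta P=0$, $\diver P=0$, and tangency $\langle x,P\rangle=0$.

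\textbf{Passing to the group.} The submodule $\calL_n\cap\calV_d^{(n)}$ is a finite-dimensional subspace invariant under the Lie algebra. Since $SO(n+1)$ is connected, the subspace is therefore invariant under the group action $P\mapsto g_*P$. Irreducibility of $\calV_d^{(n)}$ as an $SO(n+1)$-module for $n\ge4$ \cite[Theorem 6.8]{IkedaTaniguchi} then gives the dichotomy: $\calL_n\cap\calV_d^{(n)}$ is either $0$ or all of $\calV_d^{(n)}$.

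\textbf{Inductive step.} Assume $\calV_d^{(n)}\subset\calL_n$. Lemma~\ref{lem:embedded} places $X_{a_d}$ in $\calV_d^{(n)}$, so $X_{a_d}\in\calL_n$, and hence
$$
[S,X_{a_d}]=d\,X_{zb_d}\in\calL_n .
$$
By the computation preceding the statement, $zb_d$ is homogeneous harmonic of degree $d+1$, so Lemma~\ref{lem:embedded} puts $X_{zb_d}$ in $\calV_{d+1}^{(n)}$.

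It remains to check that $X_{zb_d}$ is nonzero. First, $zb_d=z\,\Im(u^d)$ is a nonzero polynomial for $d\ge1$. Second, it is nonconstant on the coordinate two-sphere $U\cap S^n$: it is homogeneous of positive degree and not identically zero, so it is not constant there. The Hamiltonian field on $U\cap S^n$ of a nonconstant function is nonzero, so $X_{zb_d}\ne0$. (If $b_d$ were somehow problematic, $X_{za_d}$ serves equally well.)

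Thus $\calL_n\cap\calV_{d+1}^{(n)}\neq0$, and irreducibility gives $\calV_{d+1}^{(n)}\subset\calL_n$, completing the induction.

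\textbf{Main obstacle.} The only substantive input is the cited irreducibility of $\calV_d^{(n)}$ for $n\ge4$, which the paper takes as known. The argument does not need irreducibility in degree $2$ separately, since the induction starts from degree $1$; it does not even need $S\in\calV_2^{(n)}$. The point to state carefully is the invariance of $\calV_d^{(n)}$ under $\ad_{R_{ij}}$, since this is what makes $\calL_n\cap\calV_d^{(n)}$ a submodule at all.
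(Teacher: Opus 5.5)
Your proof is correct and is essentially the paper's argument: induction on $d$ via $[S,X_{a_d}]=d\,X_{zb_d}$ plus irreducibility of $\calV_{d+1}^{(n)}$. The only difference is that you start the induction at $d=1$, where $X_{a_1}=R_{23}$ and $[S,R_{23}]=X_{x_2x_3}$, instead of handling degree two separately via $S\in\calV_2^{(n)}$; this is a legitimate shortcut. One small fix: ``homogeneous of positive degree and not identically zero'' does not by itself imply nonconstant on the sphere (consider $x_1^2+x_2^2+x_3^2$), but $zb_d$ is nonconstant on $U\cap S^n$ because it vanishes on $z=0$ without vanishing identically there, or alternatively because it is harmonic of positive degree.
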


\begin{proof}
We already know that
$
\calV_1^{(n)}\subset\calL_n
$
and that the shear $S$ is a nonzero element of $\calV_2^{(n)}$. Since $\calL_n$ contains $\mathfrak{so}(n+1)$, the intersection
$
\calL_n\cap\calV_2^{(n)}
$
is an $\mathfrak{so}(n+1)$-submodule of $\calV_2^{(n)}$. It is nonzero because it contains $S$. For $n\ge4$, $\calV_2^{(n)}$ is irreducible, and therefore
$
\calV_2^{(n)}\subset\calL_n.
$
Suppose inductively that
$
\calV_d^{(n)}\subset\calL_n
$
for some $d\ge2$. Then $X_{a_d}$ and $X_{b_d}$ belong to $\calL_n$. The bracket identities above show that
$$
X_{zb_d},X_{za_d}\in
\calL_n\cap\calV_{d+1}^{(n)}.
$$
In particular, this intersection is a nonzero $\mathfrak{so}(n+1)$-submodule of $\calV_{d+1}^{(n)}$. Irreducibility therefore gives
$
\calV_{d+1}^{(n)}\subset\calL_n.
$
\end{proof}

It follows from Proposition~\ref{prop:generic} and
Lemma~\ref{lem:spectral-completeness} that, for $n\ge4$, the Lie algebra
$\calL_n$ is $C^\infty$-dense in the space of smooth divergence-free
vector fields on $S^n$.

The same embedded three-dimensional calculation also works on $S^3$.
There, however, the transverse harmonic spaces are no longer irreducible,
and a separate argument is needed to show that both irreducible summands
are generated.

\section{The three-sphere}

The exceptional behavior of $S^3$ reflects the exceptional structure of its rotation algebra:
$$
\mathfrak{so}(4)\cong\mathfrak{so}(3)_+\oplus\mathfrak{so}(3)_-.
$$
At degree one this is precisely the decomposition of the infinitesimal rotations into the self-dual and 
anti-self-dual parts. The same distinction persists in the higher transverse harmonics. For each $d\ge1$,
$$
\calV_d^{(3)}=\calV_d^+\oplus\calV_d^-,
$$
where the two summands are inequivalent irreducible $SO(4)$-modules. Under the identification of vector 
fields with one-forms, they are the positive and negative eigenspaces of the curl operator $*d$, with 
eigenvalues $+(d+1)$ and $-(d+1)$, respectively; see
\cite{BarCurl,BenAchourEtAl,IkedaTaniguchi}.

We use the reflection
$
  \rho(x_1,x_2,x_3,x_4)
  =
  (x_1,x_2,x_3,-x_4).
$
This is an orientation-reversing isometry of $S^3$.  If $\alpha$ is a
differential form, then
$
  \rho^*(*\alpha)=-*\rho^*\alpha,
$
while $d$ commutes with pullback.  Hence
$
  \rho^*(*d)=-(*d)\rho^*,
$
so $\rho$ interchanges $\calV_d^+$ and $\calV_d^-$.

On the other hand, every field arising from the embedded
three-dimensional construction is fixed by $\rho$: it depends only on
$x_1,x_2,x_3$ and has no $x_4$-component.  More generally, suppose that
$
  0\ne X\in\calV_d^{(3)}
$
is fixed by $\rho$, and write
$$
  X=X_++X_-,
  \qquad
  X_\pm\in\calV_d^\pm.
$$
Since $\rho$ interchanges the two summands and fixes $X$, it follows that
$$
  \rho_*X_+=X_-,
  \qquad
  \rho_*X_-=X_+.
$$
In particular, both $X_+$ and $X_-$ are nonzero.

The following observation is the key to the exceptional $S^3$ case.

\begin{lemma}\label{lem:S3-reflection}
Let $0\ne X\in\calV_d^{(3)}$ be fixed by $\rho$.  Then the
$\mathfrak{so}(4)$-submodule generated by $X$ is all of
$\calV_d^{(3)}$.
\end{lemma}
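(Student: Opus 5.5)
The plan is to use the standard fact about submodules of a direct sum of two inequivalent irreducible modules. Let $W\subset\calV_d^{(3)}$ be the $\mathfrak{so}(4)$-submodule generated by $X$. We also consider the connected group $SO(4)$: the $\mathfrak{so}(4)$-submodules of the finite-dimensional space $\calV_d^{(3)}$ coincide with the $SO(4)$-submodules, because $SO(4)$ is connected and the action is by exponentials of the Lie algebra action.

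The first step is to show that the projections of $W$ to the two summands are onto. The projections $\pi_\pm:\calV_d^{(3)}\to\calV_d^\pm$ are module maps, since the decomposition is $SO(4)$-invariant: $\calV_d^\pm$ are eigenspaces of the curl operator $*d$, which commutes with orientation-preserving isometries. Hence $\pi_\pm(W)$ is a submodule of $\calV_d^\pm$. It contains $\pi_\pm(X)=X_\pm$, which is nonzero by the preceding discussion, so irreducibility gives $\pi_\pm(W)=\calV_d^\pm$.

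The second step uses inequivalence to show that $W$ is all of $\calV_d^{(3)}$. The intersection $W\cap\calV_d^+$ is a submodule of the irreducible $\calV_d^+$, so it is either $0$ or $\calV_d^+$. Suppose it were $0$. Then $\pi_-|_W:W\to\calV_d^-$ is injective, because its kernel is $W\cap\calV_d^+$, and it is surjective by the first step. So $W\cong\calV_d^-$. On the other hand, $\pi_+|_W:W\to\calV_d^+$ is surjective and nonzero. Since $W$ is irreducible, Schur's lemma makes $\pi_+|_W$ an isomorphism, giving $\calV_d^-\cong W\cong\calV_d^+$. This contradicts inequivalence. Hence $\calV_d^+\subset W$, and symmetrically $\calV_d^-\subset W$, so $W=\calV_d^{(3)}$.

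Equivalently, one can argue via Goursat's lemma: a proper submodule of $V_+\oplus V_-$ projecting onto both factors would be the graph of an isomorphism $V_+\to V_-$, which cannot exist.

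The main point needing care is the first step. It uses that $\calV_d^\pm$ are genuinely $\mathfrak{so}(4)$-invariant and inequivalent, both of which are quoted above. It also uses that $X_\pm\neq0$, which follows from $\rho$ interchanging the summands. That interchange holds because $\rho$ reverses orientation and so anticommutes with $*d$. Note that $\rho$ itself need not normalize the $\mathfrak{so}(4)$-action in any special way; it is used only to get $X_\pm\ne0$.
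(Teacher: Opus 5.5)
Your proposal is correct and is essentially the paper's proof. You get surjectivity of both projections from $X_\pm\neq0$ and irreducibility, then rule out a zero intersection of $W$ with a summand using Schur's lemma and the inequivalence of $\calV_d^+$ and $\calV_d^-$. The only difference is cosmetic: you argue with $W\cap\calV_d^+=\ker\pi_-|_W$, while the paper argues with $\ker\pi_+\subset\calV_d^-$.
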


\begin{proof}
Let $W$ be the $\mathfrak{so}(4)$-submodule generated by $X$.  The
coordinate projections
$$
  \pi_\pm:W\longrightarrow\calV_d^\pm
$$
are equivariant.  Since the two components $X_+$ and $X_-$ of $X$ are
nonzero, both projections are nonzero.  By irreducibility they are therefore
surjective.

Suppose $\ker\pi_+=0$.  Then $\pi_+$ identifies $W$ equivariantly with
$\calV_d^+$, while $\pi_-$ gives a nonzero equivariant map
$
  \calV_d^+\longrightarrow\calV_d^-,
$
contradicting the inequivalence of the two irreducible summands.  Thus
$\ker\pi_+\ne0$.  But
$
  \ker\pi_+\subset\calV_d^-,
$
and irreducibility of $\calV_d^-$ implies
$
  \calV_d^-\subset W.
$
The same argument with the signs reversed gives
$
  \calV_d^+\subset W.
$
Hence
$$
  W=\calV_d^+\oplus\calV_d^-
   =\calV_d^{(3)}.
$$
\end{proof}

\begin{proposition}\label{prop:S3}
For every $d\ge1$,
$
  \calV_d^{(3)}\subset\calL_3.
$
\end{proposition}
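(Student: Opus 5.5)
The plan is to run the same induction on $d$ as in Proposition~\ref{prop:generic}, replacing the irreducibility step by Lemma~\ref{lem:S3-reflection}.

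In each degree the seed will be a nonzero field built from the embedded three-dimensional construction of Lemma~\ref{lem:embedded}. Every such field depends only on $x_1,x_2,x_3$ and has vanishing $x_4$-component, so it is fixed by $\rho$. Since $\calL_3\supset\mathfrak{so}(4)$, the intersection $\calL_3\cap\calV_d^{(3)}$ is an $\mathfrak{so}(4)$-submodule of $\calV_d^{(3)}$. By Lemma~\ref{lem:S3-reflection}, the submodule generated by a single nonzero $\rho$-fixed element is all of $\calV_d^{(3)}$. So it suffices to produce, in each degree $d$, one nonzero $\rho$-fixed element of $\calL_3\cap\calV_d^{(3)}$.

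\emph{Degree one.} This is automatic, since $\calV_1^{(3)}=\mathfrak{so}(4)\subset\calL_3$. (Alternatively, $R_{12}=X_{x_3}$ is $\rho$-fixed.)

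\emph{Degree two.} Here we want to use the shear $S=X_q$ with $q=\tfrac12 x_3^2$. It lies in $\calV_2^{(3)}$ by the direct check in Section~2, and it is visibly $\rho$-fixed, because its ambient expression $(x_2x_3,-x_1x_3,0,0)$ does not involve $x_4$. Lemma~\ref{lem:S3-reflection} then gives $\calV_2^{(3)}\subset\calL_3$.

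\emph{Inductive step.} Suppose $\calV_d^{(3)}\subset\calL_3$ for some $d\ge2$. By Lemma~\ref{lem:embedded}, $X_{a_d}\in\calV_d^{(3)}$, hence $X_{a_d}\in\calL_3$. The embedded Poisson identity gives
$$
[S,X_{a_d}]=d\,X_{zb_d}\in\calL_3 .
$$
Here $zb_d$ is homogeneous harmonic of degree $d+1$ on $\R^3$. It is nonzero because $b_d=\Im(x_1+ix_2)^d\neq0$. Hence $X_{zb_d}$ is a nonzero element of $\calV_{d+1}^{(3)}$. To see nonvanishing, note that a Hamiltonian field $X_f$ on the coordinate $2$-sphere vanishes only when $f$ is constant there, and a nonzero harmonic polynomial of positive degree is not constant on the sphere. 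Being an embedded field, $X_{zb_d}$ is $\rho$-fixed. Lemma~\ref{lem:S3-reflection} then yields $\calV_{d+1}^{(3)}\subset\calL_3$, completing the induction.

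The main subtlety is not computational. It is the need to ensure that the seed in each degree is genuinely $\rho$-invariant rather than an arbitrary element of $\calL_3\cap\calV_{d+1}^{(3)}$. A general element could lie entirely in $\calV_{d+1}^+$ and would then generate only that summand. This is exactly why the induction should be driven through the specific embedded fields $X_{a_d}$, rather than through an arbitrary element of $\calV_d^{(3)}$. It also requires confirming that $\rho$-invariance of vector fields, meaning $\rho_*X=X$, holds for fields with zero $x_4$-component that are independent of $x_4$. This is immediate, since $\rho$ acts trivially on the first three coordinates.
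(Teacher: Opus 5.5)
Your proposal is correct and follows the paper's proof essentially step for step. Degree one comes from the rotations, and degree two comes from the $\rho$-fixed shear via Lemma~\ref{lem:S3-reflection}. The inductive step uses the nonzero $\rho$-fixed field $[S,X_{a_d}]=d\,X_{zb_d}\in\calV_{d+1}^{(3)}$, with your added checks of nonvanishing and $\rho$-invariance making explicit what the paper asserts in one line.
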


\begin{proof}
Degree one is supplied by the rotations:
$
  \calV_1^{(3)}\subset\calL_3.
$
The shear $S$ is a nonzero element of $\calV_2^{(3)}$ and is fixed by
$\rho$.  Lemma~\ref{lem:S3-reflection} therefore shows that the
$\mathfrak{so}(4)$-submodule generated by $S$ is all of
$\calV_2^{(3)}$.  Since $\calL_3$ contains the infinitesimal rotations and
is closed under Lie brackets,
$
  \calV_2^{(3)}\subset\calL_3.
$

Now suppose inductively that
$
\calV_d^{(3)}\subset\calL_3
$
for some $d\ge2$. 
As in the $S^2$ and $n\ge4$ cases, write $u^d=a_d+ib_d$.
Then $X_{a_d}$ and $X_{b_d}$ belong to $\calL_3$, and bracketing with the shear gives
$$
[S,X_{a_d}]=dX_{zb_d},
\qquad
[S,X_{b_d}]=-dX_{za_d}.
$$
The fields on the right belong to $\calV_{d+1}^{(3)}$, are fixed by $\rho$, and are nonzero. Hence $\calL_3$ contains a nonzero reflection-invariant element of $\calV_{d+1}^{(3)}$. Lemma~\ref{lem:S3-reflection} then implies
$
\calV_{d+1}^{(3)}\subset\calL_3.
$
\end{proof}

Proposition~\ref{prop:S3} and
Lemma~\ref{lem:spectral-completeness} therefore imply that $\calL_3$ is
$C^\infty$-dense in the space of smooth divergence-free vector fields on
$S^3$.

This completes the infinitesimal divergence-free argument in all dimensions.  We now enlarge 
$\calL_n$ by the gradient field $D_n$ and use the divergence map to pass from the kernel to 
all smooth vector fields.

\section{From divergence-free fields to all vector fields}
\label{sec:all-vector-fields}

The preceding four sections show that
$$
  \calL_n
  =
  \Lie\{R_{12},R_{23},\ldots,R_{n,n+1},S\},
  \qquad
  S=x_3R_{12},
$$
is $C^\infty$-dense in the smooth divergence-free vector fields on
$S^n$.  We now add the gradient field $D_n$ introduced in the
Introduction and prove the corresponding density statement for all smooth
vector fields.

Set
$
  \widetilde{\calL}_n
  =
  \Lie\bigl(\calL_n\cup\{D_n\}\bigr).
$
Recall that
$
  D_n=\nabla_{S^n}h_n,
$
where
$$
  h_n=
  \begin{cases}
    x_3, & n=2,\\[2mm]
    2x_3^3-3x_3(x_1^2+x_2^2), & n\ge3.
  \end{cases}
$$

The relevant structure is the exact sequence
$$
  0
  \longrightarrow
  \mathfrak X_{\diver=0}(S^n)
  \longrightarrow
  \mathfrak X(S^n)
  \overset{\diver}{\longrightarrow}
  C^\infty_0(S^n)
  \longrightarrow
  0,
$$
where $C^\infty_0(S^n)$ denotes the smooth mean-zero functions.  The
preceding sections give dense control of the kernel.  We now show that
$\widetilde{\calL}_n$ generates densely in the quotient as well.

Let
$
  \mathcal D_n
  =
  \{\diver Y:Y\in\widetilde{\calL}_n\}.
$
Every element of $\mathcal D_n$ has mean zero.  We shall use repeatedly
the identity
$$
  \diver[X,Y]
  =
  X(\diver Y)-Y(\diver X).
$$
In particular, if $X$ is divergence-free, then
$$
  \diver[X,Y]
  =
  X(\diver Y).
$$
Since $\calL_n\subset\widetilde{\calL}_n$, it follows that
$\mathcal D_n$ is invariant under the action of $\calL_n$.  In
particular, it is invariant under every infinitesimal rotation $R_{ij}$.

Let $\calH_d(S^n)$ denote the scalar spherical harmonics of degree $d$.

\begin{proposition}\label{prop:scalar-divergences}
For every $n\ge2$ and every $d\ge1$,
$
  \calH_d(S^n)\subset\mathcal D_n.
$
\end{proposition}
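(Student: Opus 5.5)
The plan is to exploit three facts established earlier: $\mathcal D_n$ is a linear subspace of $C^\infty_0(S^n)$, it is invariant under the action $f\mapsto X(f)$ of every $X\in\calL_n$, and $\calL_n$ contains every $\calV_d^{(n)}$ (Propositions~\ref{prop:two-sphere}, \ref{prop:generic}, \ref{prop:S3}). I would also use that each scalar harmonic space $\calH_d(S^n)$ is an irreducible $SO(n+1)$-module for $n\ge2$. Since $\mathfrak{so}(n+1)\subset\calL_n$, it follows that $\mathcal D_n\cap\calH_d(S^n)$ is an $\mathfrak{so}(n+1)$-submodule. Hence it suffices to exhibit one nonzero element of $\mathcal D_n$ lying in $\calH_d(S^n)$ for each $d\ge1$.

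\emph{Seed.} The function $h_n$ is homogeneous harmonic of degree $m=1$ (for $n=2$) or $m=3$ (for $n\ge3$). Therefore $\diver D_n=\Delta_{S^n}h_n=-m(m+n-1)h_n$ is a nonzero element of $\calH_m(S^n)\cap\mathcal D_n$, and irreducibility gives $\calH_m(S^n)\subset\mathcal D_n$.

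\emph{Raising.} Keep $u=x_1+ix_2$, $z=x_3$, and $u^d=a_d+ib_d$. A direct computation gives $R_{12}u^d=-id\,u^d$, so
$$S(u^d)=zR_{12}u^d=-id\,zu^d .$$
Here $zu^d$ is harmonic on $\R^3$ and independent of $x_4,\dots,x_{n+1}$, hence harmonic on $\R^{n+1}$. Thus $S(a_d)=d\,zb_d$ is a nonzero element of $\calH_{d+1}(S^n)$. If $\calH_d(S^n)\subset\mathcal D_n$, then $a_d\in\mathcal D_n$, so $S(a_d)\in\mathcal D_n$, and irreducibility gives $\calH_{d+1}(S^n)\subset\mathcal D_n$. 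Starting from the seed, this already finishes the case $n=2$, and for $n\ge3$ it gives every degree $d\ge3$.

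\emph{Lowering for $n\ge3$ (the main obstacle).} I must produce nonzero elements of $\mathcal D_n$ in $\calH_2$ and $\calH_1$. The step uses the embedded $S^2$ calculation of Lemma~\ref{lem:embedded}. For $g$ a homogeneous harmonic of $x_U$ of degree $k$, the field $X_g\in\calV_k^{(n)}\subset\calL_n$ acts on functions $f$ of $x_U$ by $X_g(f)=\{g,f\}$.
\begin{enumerate}[label=\textup{(\roman*)}]
\item Take $g=x_1x_3$ and $f=h_n\in\mathcal D_n$. Then $p=\{g,h_n\}$ is a homogeneous quartic in $x_U$ and lies in $\mathcal D_n$. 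On $S^n$ it decomposes as a sum of components in $\calH_4$, $\calH_2$ and $\calH_0$. The $\calH_0$ component vanishes because $\mathcal D_n$ consists of mean-zero functions. The $\calH_4$ component already lies in $\mathcal D_n$, so subtracting it shows that the $\calH_2$ component of $p$ lies in $\mathcal D_n$.
\item I must check that this $\calH_2$ component is nonzero. Writing $p=p_4+|x|^2p_2+c|x|^4$ with $p_4,p_2$ harmonic, this amounts to checking that $\Delta p$ is not a constant multiple of $|x|^2$. This is a finite polynomial computation, and if $x_1x_3$ fails I would try another quadratic harmonic $g$, for example $x_1^2-x_2^2$ or $x_1x_2$.
\item Once $\calH_2\subset\mathcal D_n$, repeat the argument one degree lower. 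Apply $X_g$ with $g$ quadratic to a quadratic harmonic $f\in\calH_2$, for example $g=x_1x_3$ and $f=x_2x_3$. The resulting cubic $\{g,f\}$ decomposes as a $\calH_3$ part plus $|x|^2$ times a $\calH_1$ part. The $\calH_1$ part is nonzero exactly when $\Delta\{g,f\}\ne0$, which is again a finite check.
\end{enumerate}
Irreducibility then gives $\calH_1\subset\mathcal D_n$. Together with the raising step, this yields $\calH_d(S^n)\subset\mathcal D_n$ for all $d\ge1$.

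I expect the only genuine difficulty to be the explicit verification of nonvanishing of the lower-degree projections in the lowering step. Everything else follows from irreducibility and the invariance of $\mathcal D_n$ under $\calL_n$.
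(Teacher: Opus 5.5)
Your proposal is correct and is essentially the paper's proof. It uses the same seed, the same raising identity $S(a_d)=d\,x_3b_d$, and the same lowering brackets $\{x_1x_3,h\}$ and $\{x_1x_3,x_2x_3\}$; the checks you defer succeed, since the ambient Laplacians are $-48x_2x_3$ and $2x_3$. The only difference is how you isolate the lower-degree component: you subtract the top-degree part (already in $\mathcal D_n$ by the seed and raising steps) and use mean zero, whereas the paper applies a polynomial in the scalar Casimir $\mathcal K$; both work equally well.
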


\begin{proof}
We treat $S^2$ first.  Here
$
  D_2=\nabla_{S^2}x_3,
$
and hence
$$
  \diver D_2
  =
  \Delta_{S^2}x_3
  =
  -2x_3.
$$
Thus $\mathcal D_2$ contains a nonzero element of
$\calH_1(S^2)$.  Since it is invariant under rotations and
$\calH_1(S^2)$ is irreducible,
$
  \calH_1(S^2)\subset\mathcal D_2.
$

The degree-raising calculation from Section~3 now applies directly.  Put
$$
  u=x_1+ix_2,
  \qquad
  u^d=a_d+ib_d.
$$
Suppose
$
  \calH_d(S^2)\subset\mathcal D_2.
$
Then $a_d\in\mathcal D_2$.  Since
$
  S=X_{x_3^2/2}\in\calL_2,
$
invariance of $\mathcal D_2$ gives
$$
  S(a_d)
  =
  \left\{\frac12x_3^2,a_d\right\}
  =
  d\,x_3b_d
  \in
  \mathcal D_2.
$$
The polynomial $x_3b_d$ is a nonzero homogeneous harmonic polynomial of
degree $d+1$.  Rotational irreducibility therefore gives
$
  \calH_{d+1}(S^2)\subset\mathcal D_2.
$
Starting with degree one proves the proposition for $n=2$.

Now suppose $n\ge3$, and write
$$
  h
  =
  2x_3^3-3x_3(x_1^2+x_2^2).
$$
This is a homogeneous harmonic polynomial of degree three in the first
three coordinates, so
$$
  \diver D_n
  =
  \Delta_{S^n}h
  =
  -3(n+2)h.
$$
Hence
$
  h\in\mathcal D_n.
$

We first descend from degree three to degrees two and one.  Put
$
  q=x_1x_3.
$
Its embedded Hamiltonian field $X_q$ belongs to $\calL_n$.  Using the
three-dimensional Poisson bracket from the preceding sections,
$$
  X_q(h)
  =
  \{x_1x_3,h\}
  =
  3x_2x_3
  \bigl(x_1^2+x_2^2-4x_3^2\bigr).
$$
Since $h\in\mathcal D_n$ and $X_q\in\calL_n$, this function belongs to
$\mathcal D_n$.  As an ambient homogeneous polynomial of degree four it
satisfies
$$
  \Delta X_q(h)
  =
  -48x_2x_3,
  \qquad
  \Delta^2X_q(h)=0.
$$
Its restriction to $S^n$ therefore has only degree-four and degree-two
spherical-harmonic components, and the degree-two component is nonzero.

To separate these components, let
$$
  \mathcal K
  =
  \sum_{1\le i<j\le n+1}R_{ij}^2
$$
be the scalar rotational Casimir.  On $\calH_d(S^n)$,
$
  \mathcal K$ is multiplication by $-d(d+n-1)$.
Since $\mathcal D_n$ is invariant under all rotations, it is invariant
under $\mathcal K$.  The degree-four and degree-two eigenvalues are
distinct, so a polynomial in $\mathcal K$ extracts the nonzero degree-two
component.  Irreducibility of $\calH_2(S^n)$ under $SO(n+1)$ then gives
$
  \calH_2(S^n)\subset\mathcal D_n.
$

In particular,
$
  x_2x_3\in\mathcal D_n.
$
Applying $X_q$ once more gives
$$
  X_q(x_2x_3)
  =
  \{x_1x_3,x_2x_3\}
  =
  -x_3(x_1^2+x_2^2-x_3^2).
$$
This homogeneous polynomial of degree three satisfies
$$
  \Delta X_q(x_2x_3)
  =
  2x_3,
  \qquad
  \Delta^2X_q(x_2x_3)=0.
$$
Its restriction to the sphere therefore has only degree-three and degree-one
components, with nonzero degree-one component.  The scalar Casimir separates
the two degrees, and rotational irreducibility gives
$
  \calH_1(S^n)\subset\mathcal D_n.
$

We may now use the same degree-raising argument as in the $S^2$ case.
Suppose
$
  \calH_d(S^n)\subset\mathcal D_n.
$
With
$$
  u=x_1+ix_2,
  \qquad
  u^d=a_d+ib_d,
$$
we have $a_d\in\mathcal D_n$, and hence
$$
  S(a_d)
  =
  \left\{\frac12x_3^2,a_d\right\}
  =
  d\,x_3b_d
  \in
  \mathcal D_n.
$$
The polynomial $x_3b_d$ is a nonzero homogeneous harmonic polynomial of
degree $d+1$.  Rotational irreducibility therefore gives
$
  \calH_{d+1}(S^n)\subset\mathcal D_n.
$
Starting with degree one completes the induction.
\end{proof}

Thus the divergences of fields in $\widetilde{\calL}_n$ contain every
finite scalar spherical-harmonic sum of mean zero.

\begin{proposition}\label{prop:all-vector-fields}
For every $n\ge2$,
$
  \overline{\widetilde{\calL}_n}^{\,C^\infty}
  =
  \mathfrak X(S^n).
$
\end{proposition}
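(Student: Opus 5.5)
Let $Z\in\mathfrak X(S^n)$ be smooth; we approximate it in every $C^k$ norm by elements of $\widetilde{\calL}_n$. The plan is to split $Z$ along the divergence exact sequence: approximate the divergence part by divergences of elements of $\widetilde{\calL}_n$, then correct by a divergence-free field approximated using $\calL_n$.

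\emph{Step 1 (the divergence).} Put $f=\diver Z\in C^\infty_0(S^n)$ and expand $f=\sum_{d\ge1}f_d$ with $f_d\in\calH_d(S^n)$. As in Lemma~\ref{lem:spectral-completeness}, this series converges in every Sobolev norm. Fix a truncation $f_{\le N}=\sum_{d\le N}f_d$. By Proposition~\ref{prop:scalar-divergences} and linearity, there is $Y_N\in\widetilde{\calL}_n$ with $\diver Y_N=f_{\le N}$. The difficulty is that $Y_N$ is not controlled in size, so approximating the divergence does not by itself approximate $Z$.

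\emph{Step 2 (the divergence-free remainder).} The field $Z-Y_N$ has divergence $f-f_{\le N}$, which is small but need not vanish. Write $Z=\nabla\phi+W$ with $\Delta\phi=f$ (Hodge decomposition) and $W$ divergence-free. Note that $\nabla\Delta^{-1}f_d$ is just a multiple of $\nabla f_d$. Then
$$
Z-Y_N=\bigl(\nabla\Delta^{-1}(f-f_{\le N})\bigr)+\bigl(W+\nabla\Delta^{-1}f_{\le N}-Y_N\bigr).
$$
The first term tends to $0$ in every $C^k$ norm as $N\to\infty$, by elliptic regularity and the spectral tail estimate. The second term, $W_N:=W+\nabla\Delta^{-1}f_{\le N}-Y_N$, is smooth and divergence-free, since $\diver\nabla\Delta^{-1}f_{\le N}=f_{\le N}=\diver Y_N$. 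By Propositions~\ref{prop:two-sphere}, \ref{prop:generic}, \ref{prop:S3} together with Lemma~\ref{lem:spectral-completeness}, $\calL_n$ is $C^\infty$-dense in divergence-free fields. Hence there is $V_N\in\calL_n\subset\widetilde{\calL}_n$ with $W_N-V_N$ small in $C^k$, to any prescribed tolerance.

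\emph{Step 3 (conclusion).} Combining the two steps, $Y_N+V_N\in\widetilde{\calL}_n$ and
$$
Z-(Y_N+V_N)=\nabla\Delta^{-1}(f-f_{\le N})+(W_N-V_N).
$$
Both terms are small in $C^k$: first choose $N$ large, then choose $V_N$ given $N$. Since $k$ is arbitrary, a diagonal choice gives a sequence converging in $C^\infty$, so the closure is all of $\mathfrak X(S^n)$.

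The main obstacle is the one flagged in Step 1: the elements $Y_N$ from Proposition~\ref{prop:scalar-divergences} carry uncontrolled divergence-free parts. This is resolved exactly by absorbing $Y_N$ into the divergence-free field $W_N$, which is exact in its divergence. Thus the only analytic estimate needed is the $C^k$ decay of $\nabla\Delta^{-1}$ applied to scalar spectral tails. This is standard: $\nabla\Delta^{-1}$ is bounded $H^{s}\to H^{s+1}$ on mean-zero functions, and Sobolev embedding then gives the $C^k$ bound.
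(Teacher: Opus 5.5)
Your proposal is correct and follows essentially the same route as the paper. The paper also uses the right inverse $\mathcal R f=\nabla u$ with $\Delta u=f$, approximates $\diver Y$ by finite harmonic sums $f_j$ realized as divergences of some $Z_j\in\widetilde{\calL}_n$, and then approximates the divergence-free field $Y-\mathcal R(f-f_j)-Z_j$ by elements of $\calL_n$; your $W_N$ is exactly this field, with the roles of your $Z$ and $Y_N$ played by the paper's $Y$ and $Z_j$.
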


\begin{proof}
The divergence map has a continuous right inverse.  If $f\in
C^\infty_0(S^n)$, solve
$
  \Delta u=f
$
with zero mean and set
$
  \mathcal Rf=\nabla u.
$
Then
$
  \diver(\mathcal Rf)=f.
$

Let $Y$ be an arbitrary smooth vector field and put
$
  f=\diver Y.
$
Choose finite scalar spherical-harmonic sums $f_j$ such that
$
  f_j\longrightarrow f
$
in $C^\infty$.  By Proposition~\ref{prop:scalar-divergences},
$f_j\in\mathcal D_n$, so there is
$
  Z_j\in\widetilde{\calL}_n
$
with
$
  \diver Z_j=f_j.
$

Set
$
  Y_j
  =
  Y-\mathcal R(f-f_j).
$
Then
$
  Y_j\longrightarrow Y
$
in $C^\infty$, while
$
  \diver(Y_j-Z_j)=0.
$
The preceding four sections show that $\calL_n$ is $C^\infty$-dense in
the divergence-free vector fields.  We may therefore choose
$
  X_j\in\calL_n\subset\widetilde{\calL}_n
$
such that
$$
  X_j-(Y_j-Z_j)\longrightarrow0
$$
in $C^\infty$.  It follows that
$
  X_j+Z_j\longrightarrow Y
$
in $C^\infty$.  Hence
$
  \overline{\widetilde{\calL}_n}^{\,C^\infty}
  =
  \mathfrak X(S^n).
$
\end{proof}

Proposition~\ref{prop:all-vector-fields} is the infinitesimal statement
needed for Proposition~\ref{prop:nplusone}\textup{(a)}.  Together with the
divergence-free density proved in Sections~2--5, it leaves only the passage
from vector fields to their flows, which we carry out in the next section.

\section{From vector fields to diffeomorphisms}

We now complete the proof of Proposition~\ref{prop:nplusone}.  Let
$
  G(\mathcal E_n)
$
denote the subgroup generated by the flows of the fields in
$\mathcal E_n$, and let
$
  G(\widetilde{\mathcal E}_n)
$
denote the subgroup generated by the flows of the enlarged family
$\widetilde{\mathcal E}_n$.  Thus
$$
  G(\mathcal E_n)\subset\SDiff_0(S^n),
  \qquad
  G(\widetilde{\mathcal E}_n)\subset\Diff_0(S^n).
$$
The analytic argument is the same for the two families.  We first work in a
fixed finite $C^k$ topology and only at the end pass to $C^\infty$.

Fix an integer $k\ge1$.  We use the following standard facts about flows
of smooth vector fields on a compact manifold.  They are elementary
finite-regularity forms of standard product and commutator formulas for
flows, often organized under the heading of chronological calculus; see
\cite{AgrachevGamkrelidze,MauhartMichor}.  We include the estimates needed
here.

\begin{lemma}[Finite-$C^k$ flow calculus]\label{lem:flow}
Let $M$ be compact without boundary, and let $X,Y$ be smooth vector
fields on $M$.  Denote the flow of $X$ by $\Phi_X^t$.

\begin{enumerate}[label=(\roman*),leftmargin=2em]
\item The Lie--Trotter products
$$
  \left(\Phi_X^{t/N}\circ\Phi_Y^{t/N}\right)^N
$$
converge in $C^k$ to $\Phi_{X+Y}^t$, uniformly for $t$ in compact
intervals.

\item Define the group commutator
$$
  C_\varepsilon(X,Y)
  =
  \Phi_Y^{-\varepsilon}\circ
  \Phi_X^{-\varepsilon}\circ
  \Phi_Y^\varepsilon\circ
  \Phi_X^\varepsilon.
$$
Then
$$
  C_\varepsilon(X,Y)
  =
  \Phi_{[X,Y]}^{\varepsilon^2}
  +O_{C^k}(|\varepsilon|^3).
$$
Consequently, for $t>0$,
$$
  C_{\sqrt{t/N}}(X,Y)^N
  \longrightarrow
  \Phi_{[X,Y]}^t
$$
in $C^k$; negative $t$ follows by inversion.

\item If $X_j\to X$ in $C^{k+1}$, then
$
  \Phi_{X_j}^t\longrightarrow\Phi_X^t
$
in $C^k$, uniformly for $t$ in compact intervals.

\item The analogous statement holds on a fixed interval $[0,T]$ for
time-dependent vector fields that are smooth in space and piecewise
continuous in time with values in $C^{k+1}$.  If
$X_j(t,\cdot)\to X(t,\cdot)$ uniformly in $t$ in $C^{k+1}$, their
evolution maps, starting at the identity, converge uniformly in $t$ in
$C^k$.
\end{enumerate}
\end{lemma}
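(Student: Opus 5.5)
The plan is to pass to local coordinates and reduce everything to ODE estimates for flows of $C^{k+1}$ vector fields on a compact manifold. Embed $M$ in some $\R^m$ (Whitney), or cover it by finitely many charts. Each flow map and each of its derivatives up to order $k$ is then controlled by Gronwall-type estimates. The basic tool is the variational equation: if $\Phi^t$ solves $\partial_t\Phi^t=X(\Phi^t)$, then $D^j\Phi^t$ for $j\le k$ satisfies a linear ODE whose coefficients involve $D^iX$, $i\le j$, evaluated along $\Phi^t$. Gronwall therefore gives bounds on $\|\Phi^t\|_{C^k}$ uniform for $t$ in compact intervals, in terms of $\|X\|_{C^k}$. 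Differentiating the same equations in a parameter gives the corresponding stability statements. I would prove (iii) and (iv) first, since (i) and (ii) are then reduced to local consistency estimates combined with a stability principle.

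For (iii), subtract the ODEs for $\Phi_{X_j}^t$ and $\Phi_X^t$ and for their first $k$ spatial derivatives. The difference of $D^k$-terms involves $D^{k}X_j(\Phi_{X_j})-D^kX(\Phi_X)$. Bounding this requires a Lipschitz bound on $D^kX$, which is why convergence is assumed in $C^{k+1}$. Gronwall then gives $\|\Phi_{X_j}^t-\Phi_X^t\|_{C^k}\le C\|X_j-X\|_{C^{k+1}}$ uniformly on $[0,T]$. Part (iv) is the identical argument with $t$-dependent coefficients. Piecewise continuity in $t$ suffices because only integrability of the coefficients enters Gronwall.

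For (i) and (ii), I would use a Lady Windermere's fan argument. Take the one-step map $\Psi_h=\Phi_X^h\circ\Phi_Y^h$. Taylor expansion in $h$, with each term a smooth map estimated in $C^k$, gives $\Psi_h=\Phi_{X+Y}^h+O_{C^k}(h^2)$. The derivative bounds used here are uniform because $X$ and $Y$ are smooth. Writing the telescoping sum
$$\Psi_h^N-\Phi_{X+Y}^{Nh}=\sum_{i}\Psi_h^{N-i-1}\circ\Psi_h\circ\Phi^{ih}_{X+Y}-\Psi_h^{N-i-1}\circ\Phi^{h}_{X+Y}\circ\Phi^{ih}_{X+Y},$$
each summand is $O(h^2)$ in $C^k$, provided $\Psi_h^{m}$ is uniformly $C^{k+1}$-bounded for $mh\le T$. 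Composition is Lipschitz in $C^k$ when the outer map is bounded in $C^{k+1}$. With $h=t/N$ the $N$ errors sum to $O(1/N)$.

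For (ii), expand each factor to second order: $\Phi_X^\varepsilon=\mathrm{id}+\varepsilon X+\tfrac{\varepsilon^2}{2}X\!\cdot\!\nabla X+O(\varepsilon^3)$ in a chart. The composition cancels the first-order terms and the symmetric second-order terms, leaving $\mathrm{id}+\varepsilon^2[X,Y]+O(\varepsilon^3)$. Comparing with $\Phi^{\varepsilon^2}_{[X,Y]}=\mathrm{id}+\varepsilon^2[X,Y]+O(\varepsilon^4)$ gives the stated local estimate. The same fan argument with step $h=\varepsilon^2=t/N$ and local error $O(h^{3/2})$ then yields total error $O(N\cdot N^{-3/2})\to0$.

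The main obstacle is making the uniform $C^{k+1}$ bound on the iterates $\Psi_h^m$ rigorous, together with the remainder terms being genuinely $O$ in $C^k$ rather than pointwise. To control the iterates, I would show by induction that $\|D^j\Psi_h^m\|\le e^{Cmh}$ for $j\le k+1$, using $\|\Psi_h\|_{C^{k+1}}\le 1+Ch$ and the chain/Faà di Bruno rule. The remainders are handled by Taylor's theorem with integral remainder in $h$, applied to the smooth family $(h,x)\mapsto\Psi_h(x)$ on the compact set $[-1,1]\times M$.
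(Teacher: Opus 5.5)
Your proposal is correct and follows essentially the paper's argument: $C^k$-controlled Taylor expansions give the one-step errors in (i) and (ii), these are accumulated via composition estimates, and (iii)--(iv) follow from variational equations and Gronwall. Your Lady Windermere telescoping is only a bookkeeping variant of the paper's one-step discrete Gronwall recursion. The one point to state explicitly is that in (ii) the iterate bound needs $C_\varepsilon=\mathrm{id}+O_{C^{k+1}}(\varepsilon^2)$, i.e.\ the second-order cancellation must hold in $C^{k+1}$, since a bound $1+C\varepsilon$ would blow up over $t/\varepsilon^2$ steps; the paper says this, and your smooth-family Taylor remainder argument supplies it.
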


\begin{proof}
Because $M$ is compact and the vector fields are smooth, the maps
$h\mapsto\Phi_X^h$ and $h\mapsto\Phi_Y^h$ admit Taylor expansions at
$h=0$ with remainders controlled in $C^k$.  In local coordinates,
$$
  \Phi_X^h
  =
  \operatorname{id}+hX+O_{C^k}(h^2).
$$
It follows that
$$
  \Phi_X^h\circ\Phi_Y^h
  =
  \Phi_{X+Y}^h+O_{C^k}(h^2).
$$
With $h=t/N$, the local error is $O(N^{-2})$.  Uniform $C^{k+1}$
bounds for the short-time maps and the usual composition estimates give a
total error $O(N^{-1})$ after $N$ steps, proving (i).

For the commutator, Taylor expansion one order further gives
$$
  C_\varepsilon(X,Y)
  =
  \operatorname{id}
  +\varepsilon^2[X,Y]
  +O_{C^k}(|\varepsilon|^3),
$$
which is the classical commutator-of-flows formula.  Since
$$
  \Phi_{[X,Y]}^{\varepsilon^2}
  =
  \operatorname{id}
  +\varepsilon^2[X,Y]
  +O_{C^k}(\varepsilon^4),
$$
the one-step discrepancy is $O_{C^k}(|\varepsilon|^3)$.  The same Taylor
expansion is valid in $C^{k+1}$, and gives
$$
  C_\varepsilon(X,Y)
  =
  \operatorname{id}+O_{C^{k+1}}(\varepsilon^2).
$$
Consequently its iterates have uniformly bounded derivatives through order
$k$ while the number of iterates $m$ satisfies
$m\varepsilon^2\le t$.  Applying the composition estimates and discrete
Gronwall inequality to these iterates gives
$$
  \bigl\|C_\varepsilon(X,Y)^N
  -\Phi_{[X,Y]}^{N\varepsilon^2}\bigr\|_{C^k}
  \le C_tN|\varepsilon|^3.
$$
Taking $\varepsilon=\sqrt{t/N}$ gives an error $O(N^{-1/2})$, proving
(ii).

For (iii), write the integral equations for the flows.  Gronwall's
inequality gives convergence in $C^0$.  Differentiating with respect to
the initial point gives the first variational equation; higher derivatives
satisfy linear inhomogeneous variational equations involving only finitely
many derivatives of the vector fields and lower derivatives of the flow.
Induction through order $k$, followed by Gronwall's inequality at each
stage, proves convergence in $C^k$.  The same argument, with estimates
uniform in time, proves (iv).  Piecewise-continuous time dependence causes
no difficulty.
\end{proof}

We now apply Lemma~\ref{lem:flow} to the two generating families.

Let
$$
  \overline{G(\mathcal E_n)}^{\,k}
  \subset\Diff^k(S^n),
  \qquad
  \overline{G(\widetilde{\mathcal E}_n)}^{\,k}
  \subset\Diff^k(S^n)
$$
denote the corresponding $C^k$ closures.  Since $\Diff^k(S^n)$ is a
topological group, both closures are subgroups.

Let $\calA_{n,k}^{\,\mathrm{vol}}$ consist of the smooth divergence-free
vector fields $X$ for which
$$
  \Phi_X^t\in\overline{G(\mathcal E_n)}^{\,k}
  \qquad\text{for every }t\in\R,
$$
and let $\calA_{n,k}^{\,\mathrm{all}}$ consist of all smooth vector fields
$X$ for which
$$
  \Phi_X^t\in
  \overline{G(\widetilde{\mathcal E}_n)}^{\,k}
  \qquad\text{for every }t\in\R.
$$
Lemma~\ref{lem:flow} shows that both spaces are Lie algebras: time rescaling
gives scalar multiples, the Lie--Trotter formula gives sums, and the
commutator formula gives brackets.  Hence
$
  \calL_n\subset\calA_{n,k}^{\,\mathrm{vol}},
$ and
$  \widetilde{\calL}_n\subset\calA_{n,k}^{\,\mathrm{all}}.
$

Sections~2--5 show that $\calL_n$ is $C^\infty$-dense in the smooth
divergence-free vector fields, while
Proposition~\ref{prop:all-vector-fields} shows that
$\widetilde{\calL}_n$ is $C^\infty$-dense in all smooth vector fields.
If $X$ is a smooth vector field in the corresponding class, choose
$X_j$ in $\calL_n$, respectively in $\widetilde{\calL}_n$, with
$
  X_j\longrightarrow X
$
in $C^{k+1}$.  Lemma~\ref{lem:flow}(iii) then gives
$
  \Phi_{X_j}^t\longrightarrow\Phi_X^t
$
in $C^k$.  Since the appropriate closure is closed, we conclude that
$
  \calA_{n,k}^{\,\mathrm{vol}}
  =
  \mathfrak X_{\diver=0}(S^n)$
and
$  \calA_{n,k}^{\,\mathrm{all}}
  =
  \mathfrak X(S^n).
$

It remains to pass from autonomous vector fields to arbitrary isotopies.
The same time-slicing argument treats the two cases in parallel.

First let
$
  \Phi\in\Diff_0(S^n).
$
Choose a smooth isotopy $\Phi_t$, $0\le t\le1$, from the identity to
$\Phi$, and let
$
  Y_t=\dot\Phi_t\circ\Phi_t^{-1}
$
be its time-dependent generating vector field.  For a partition
$$
  0=t_0<t_1<\cdots<t_N=1,
$$
freeze $Y_t$ on each subinterval and form
$$
  \Psi_P
  =
  \Phi_{Y_{t_{N-1}}}^{t_N-t_{N-1}}
  \circ\cdots\circ
  \Phi_{Y_{t_0}}^{t_1-t_0}.
$$
Every factor lies in
$\overline{G(\widetilde{\mathcal E}_n)}^{\,k}$, and therefore so does
$\Psi_P$.  As the mesh tends to zero, Lemma~\ref{lem:flow}(iv) gives
$
  \Psi_P\longrightarrow\Phi
$
in $C^k$.  Hence
$
  \Diff_0(S^n)
  \subset
  \overline{G(\widetilde{\mathcal E}_n)}^{\,k}
$
for every finite $k\ge1$.  Equivalently,
$G(\widetilde{\mathcal E}_n)$ is $C^k$-dense in
$\Diff_0(S^n)$.

Now let
$
  \Phi\in\SDiff_0(S^n).
$
By the definition of $\SDiff_0(S^n)$, choose a smooth
volume-preserving isotopy $\Phi_t$ from the identity to $\Phi$.  Its
generating field $Y_t$ is divergence-free.  Repeating the same
time-slicing argument, now using
$\calA_{n,k}^{\,\mathrm{vol}}$, gives
$
  \SDiff_0(S^n)
  \subset
  \overline{G(\mathcal E_n)}^{\,k}
$
for every finite $k\ge1$.  Thus $G(\mathcal E_n)$ is $C^k$-dense in
$\SDiff_0(S^n)$.

Finally, let $\mathcal U$ be a $C^\infty$ neighborhood of a
diffeomorphism $\Phi$ in either target group.  By the definition of the
Fr\'echet topology, there are an integer $K\ge0$ and a $C^K$
neighborhood $\mathcal V$ of $\Phi$ such that the intersection of
$\mathcal V$ with the relevant target group is contained in
$\mathcal U$.  Taking $k\ge\max\{1,K\}$ and using the finite-$C^k$
density proved above yields an element of the corresponding generated group
in $\mathcal U$.  Since the generated groups themselves lie in the
corresponding identity components, we conclude that
$$
  \overline{G(\widetilde{\mathcal E}_n)}^{\,C^\infty}
  =
  \Diff_0(S^n),
  \qquad\mbox{and}\qquad
  \overline{G(\mathcal E_n)}^{\,C^\infty}
  =
  \SDiff_0(S^n).
$$
This proves both parts of Proposition~\ref{prop:nplusone}.

\section{From the natural generating families to two generators}
\label{sec:two-generators}

We now prove Proposition~\ref{prop:two-recover} and complete the proof of
Theorem~\ref{thm:main}.  Recall
$$
  A=R_{12},
  \qquad
  B=S+R_{23}+R_{34}+\cdots+R_{n,n+1},
  \qquad
  \widetilde B=B+D_n,
$$
where
$
  S=x_3R_{12},
$
and
$$
  D_n=\nabla_{S^n}h_n,
  \qquad
  h_n=
  \begin{cases}
    x_3, & n=2,\\[2mm]
    2x_3^3-3x_3(x_1^2+x_2^2), & n\ge3.
  \end{cases}
$$

Let
$
  \calM_n=\Lie\{A,B\}$
  and
$
  \widetilde{\calM}_n=\Lie\{A,\widetilde B\}.
$
We shall show that
$
  \mathcal E_n\subset\calM_n$
and
$  \widetilde{\mathcal E}_n\subset\widetilde{\calM}_n.
$

\subsection*{Recovering the first rotations}

The first brackets are the same in both cases.  Since $R_{12}$ commutes
with $S$, with $R_{k,k+1}$ for $k\ge3$, and with $D_n$, we have
$
  [A,B]
  =
  [A,\widetilde B]
  =
  [R_{12},R_{23}]
  =
  -R_{13}.
$
Hence
$
  R_{13}\in\calM_n\cap\widetilde{\calM}_n.
$
Bracketing once more with $A$,
$$
  [A,[A,B]]
  =
  [A,[A,\widetilde B]]
  =
  -[R_{12},R_{13}]
  =
  -R_{23}.
$$
Thus
$$
  R_{12},R_{13},R_{23}
  \in
  \calM_n\cap\widetilde{\calM}_n.
$$

These three rotations form the standard copy of
$\mathfrak{so}(3)$ acting in the first three coordinates.  Define its
Casimir operator on vector fields by
$$
  \mathcal C
  =
  \ad_{R_{12}}^2
  +
  \ad_{R_{13}}^2
  +
  \ad_{R_{23}}^2.
$$
Since the three rotations belong to both generated Lie algebras,
$\mathcal C$ preserves both $\calM_n$ and
$\widetilde{\calM}_n$.

We first record the action of $\mathcal C$ on the divergence-free terms
that occur in $B$.

\begin{lemma}\label{lem:casimir-recovery}
As a linear map on vector fields, $\mathcal C$ preserves the span of
$$
  \{S,R_{12},R_{23},\ldots,R_{n,n+1}\},
$$
and on the relevant fields satisfies
$$
  \mathcal C S=-6S,
  \qquad
  \mathcal C R_{12}=-2R_{12},
  \qquad
  \mathcal C R_{23}=-2R_{23},
  \qquad
  \mathcal C R_{34}=-2R_{34},
$$
while
$
  \mathcal C R_{k,k+1}=0$
  for
 $k\ge4$.

\end{lemma}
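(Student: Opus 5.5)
Everything reduces to the bracket relations of the rotations and the bracket of rotations with $S$. First record the commutator rule for linear vector fields: writing $R_{ij}=Ex$ with $E=e_ie_j^{T}-e_je_i^{T}$, one has $[Ex,Fx]=-[E,F]x$ in matrix notation, which gives
$$
  [R_{ij},R_{jk}]=-R_{ik}
$$
for distinct $i,j,k$, consistent with the relation $[R_{12},R_{23}]=-R_{13}$ already used above. Also, $[R_{ij},R_{kl}]=0$ when $\{i,j\}\cap\{k,l\}=\emptyset$.

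For $k\ge4$, the indices $k,k+1$ are both at least $4$ and are therefore disjoint from $\{1,2,3\}$. Each $R_{k,k+1}$ then commutes with $R_{12},R_{13},R_{23}$, so $\mathcal C R_{k,k+1}=0$.

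For $R_{12}$ and $R_{23}$, note that $\ad$ restricted to $\Span\{R_{12},R_{13},R_{23}\}$ is the adjoint representation of $\mathfrak{so}(3)$. Its Casimir, normalized as above, equals $-2$. Directly, $\ad_{R_{12}}^2R_{12}=0$ and $\ad_{R_{13}}^2R_{12}=-R_{12}=\ad_{R_{23}}^2R_{12}$. The same computation for $R_{23}$ gives $-2R_{23}$.

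For $R_{34}$, the brackets are $[R_{13},R_{34}]=-R_{14}$ and $[R_{13},R_{14}]=R_{34}$ (up to the sign convention), so $\ad_{R_{13}}^2R_{34}=-R_{34}$. Likewise $\ad_{R_{23}}^2R_{34}=-R_{34}$, and $R_{12}$ commutes with $R_{34}$. Hence $\mathcal C R_{34}=-2R_{34}$. Conceptually, $\Span\{R_{14},R_{24},R_{34}\}$ is the vector representation of this $\mathfrak{so}(3)$, on which the Casimir is again $-2$.

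For $S$, use Lemma~\ref{lem:embedded}. Here $R_{12},R_{13},R_{23}$ are the embedded Hamiltonian fields $X_{x_3},X_{\mp x_2},X_{x_1}$, and $S=X_{x_3^2/2}$. Since $f\mapsto X_f$ intertwines Poisson brackets with Lie brackets, $\mathcal C S=X_{Kf}$ with $f=x_3^2/2$, where $K=\sum\{x_i,\{x_i,\cdot\}\}$ is the $S^2$ scalar rotational Casimir $\sum_{i<j\le3}R_{ij}^2$. Write $f=\tfrac12(x_3^2-\tfrac13|x_U|^2)+\tfrac16|x_U|^2$. The second term Poisson-commutes with everything in $x_U$. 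The first is a degree-two harmonic in $x_U$, on which $K$ acts by $-d(d+1)=-6$. Hence $\mathcal C S=X_{-6(f-\frac16|x_U|^2)}=-6S$, because $X_{|x_U|^2}=0$.

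These formulas together show that the span is preserved. The only delicate step is bookkeeping: the signs in the identification of the $R_{1j}$ with Hamiltonians, and checking that $X_{|x_U|^2}=0$ exactly, not merely on the sphere. The sign issue is harmless, since each $\ad$ appears squared. The identity holds because $x_U\times\nabla_U|x_U|^2=2\,x_U\times x_U=0$.
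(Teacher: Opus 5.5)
Your proof is correct. For $R_{12}$, $R_{23}$, $R_{34}$ and $R_{k,k+1}$ ($k\ge4$) it is the paper's argument; the genuine difference is in how you compute $\mathcal C S$.

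The paper brackets directly. It finds $[R_{13},S]=-x_1R_{12}-x_3R_{23}$ and $[R_{23},S]=-x_2R_{12}+x_3R_{13}$. A second bracket gives $-2S+2x_1R_{23}$ and $-2S-2x_2R_{13}$, and the identity $x_1R_{23}-x_2R_{13}=-S$ collapses the sum to $-6S$.

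You instead pass through Lemma~\ref{lem:embedded}. With $R_{12}=X_{x_3}$, $R_{13}=X_{-x_2}$ (this settles your $\mp$), $R_{23}=X_{x_1}$ and $S=X_{x_3^2/2}$, you get $\mathcal C X_f=X_{Kf}$. Splitting $x_3^2/2$ into a degree-two harmonic on $U$ plus $\frac16|x_U|^2$, whose Hamiltonian field vanishes identically, then yields $-6S$.

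What each route buys:
\begin{itemize}
\item The paper's computation is entirely elementary and self-contained.
\item Yours explains why the eigenvalue is $-d(d+1)$ with $d=2$: up to a radial term with zero field, the shear is the Hamiltonian field of a quadratic harmonic. It also parallels the paper's own treatment of $\mathcal C D_n$ via $[R,\nabla f]=\nabla(Rf)$. Finally, it shows at once that every embedded $X_f$ with $f$ homogeneous harmonic of degree $d$ on $U$ is a $\mathcal C$-eigenvector with eigenvalue $-d(d+1)$.
\end{itemize}

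Your caution that the identities must hold on all of $U$, not just on its unit sphere, is the right one. For $n\ge3$, points of $S^n$ have $|x_U|\le1$, so $X_h$ there depends on $h$ off that sphere. The argument goes through because of two facts:
\begin{itemize}
\item $K$ acts by $-d(d+1)$ on homogeneous harmonic polynomials as an identity of polynomials on $U$.
\item $X_{|x_U|^2}=x_U\times 2x_U=0$ pointwise.
\end{itemize}
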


\begin{proof}
Since $[R_{12},S]=0$, only the other two terms of $\mathcal C$
contribute to $\mathcal C S$.  Direct calculation gives
$$
  [R_{13},S]
  =
  -x_1R_{12}-x_3R_{23}
\qquad
\mbox{and}\qquad
  [R_{23},S]
  =
  -x_2R_{12}+x_3R_{13}.
$$
Bracketing once more,
$$
  [R_{13},[R_{13},S]]
  =
  -2S+2x_1R_{23}
  \qquad\mbox{and}\qquad
  [R_{23},[R_{23},S]]
  =
  -2S-2x_2R_{13}.
$$
But
$
  x_1R_{23}-x_2R_{13}=-S,
$
and hence
$
  \mathcal C S=-6S.
$

The fields $R_{12},R_{13},R_{23}$ form the adjoint representation of
$\mathfrak{so}(3)$, on which the Casimir acts by $-2$; in particular
$
  \mathcal C R_{12}=-2R_{12}$
  and
  $
  \mathcal C R_{23}=-2R_{23}.
$

For $R_{34}$,
we have $
  [R_{13},R_{34}]=-R_{14}$
and
$ [R_{13},R_{14}]=R_{34},
$
so
$
  \ad_{R_{13}}^2R_{34}=-R_{34}.
$
Similarly,
$
  \ad_{R_{23}}^2R_{34}=-R_{34},
$
while $R_{12}$ commutes with $R_{34}$.  Thus
$
  \mathcal C R_{34}=-2R_{34}.
$

Finally, if $k\ge4$, the indices of $R_{k,k+1}$ are disjoint from
$1,2,3$, so it commutes with all three rotations defining
$\mathcal C$.  Hence
$
  \mathcal C R_{k,k+1}=0.
$
\end{proof}

The gradient terms are just as simple.  For every rotation field $R$,
equivariance of the gradient gives
$
  [R,\nabla f]=\nabla(Rf).
$
Consequently
$$
  \mathcal C D_n
  =
  \nabla\!\left(
  (R_{12}^2+R_{13}^2+R_{23}^2)h_n
  \right).
$$
For $n=2$, $h_2=x_3$ is a degree-one spherical harmonic for this
$SO(3)$-action, and hence
$
  \mathcal C D_2=-2D_2.
$
For $n\ge3$, $h_n$ is a degree-three spherical harmonic in the first
three coordinates, and therefore
$
  \mathcal C D_n=-12D_n.
$

\subsection*{The full diffeomorphism family}

We first prove Proposition~\ref{prop:two-recover}\textup{(a)}.

Suppose first that $n=2$.  Set
$
  \widetilde Z
  =
  \widetilde B-R_{23}
  =
  D_2+S.
$
The two terms have distinct Casimir eigenvalues:
$
  \mathcal C D_2=-2D_2,
$ and
$\mathcal C S=-6S.
$
Hence
$$
  D_2
  =
  \frac14(\mathcal C+6)\widetilde Z
  \qquad
\mbox{and}
\qquad
  S
  =
  -\frac14(\mathcal C+2)\widetilde Z.
$$
Thus
$
  D_2,S\in\widetilde{\calM}_2.
$
Together with  $R_{12}$ and $R_{23}$, these are
precisely the fields in $\widetilde{\mathcal E}_2$.

Now suppose $n\ge3$.  Again set
$
  \widetilde Z
  =
  \widetilde B-R_{23}.
$
Then
$$
  \widetilde Z
  =
  D_n+S+R_{34}+\Bigl(R_{45}+\cdots+R_{n,n+1}\Bigr).
$$
The four types of terms occurring here have Casimir eigenvalues
$
  -12$, $-6$, $-2$, and 0,
respectively.  Polynomial filtering therefore gives
$$
  D_n
  =
  -\frac1{720}
  \mathcal C(\mathcal C+2)(\mathcal C+6)\widetilde Z,
\qquad
\mbox{and}
\qquad
  S
  =
  \frac1{144}
  \mathcal C(\mathcal C+2)(\mathcal C+12)\widetilde Z.
$$
Hence
$
  D_n,S\in\widetilde{\calM}_n.
$

Subtracting the recovered fields from $\widetilde Z$ gives
$$
  T_3
  =
  R_{34}+R_{45}+\cdots+R_{n,n+1}
  \in
  \widetilde{\calM}_n.
$$
The remaining rotations can now be recovered successively.

Suppose, for some $j$, $3\le j\le n$, that
$
  R_{j-1,j}\in\widetilde{\calM}_n
$
and
$$
  T_j
  =
  R_{j,j+1}+R_{j+1,j+2}+\cdots+R_{n,n+1}
  \in
  \widetilde{\calM}_n.
$$
All terms of $T_j$ except the first commute with $R_{j-1,j}$, so
$
  [R_{j-1,j},T_j]
  =
  -R_{j-1,j+1}.
$
Bracketing once more gives
$
  [R_{j-1,j},[R_{j-1,j},T_j]]
  =
  -R_{j,j+1}.
$

Thus
$
  R_{j,j+1}\in\widetilde{\calM}_n,
$
and subtraction yields
$$
  T_{j+1}
  =
  T_j-R_{j,j+1}
  \in
  \widetilde{\calM}_n.
$$
Starting with the already known $R_{23}$ and $T_3$, induction recovers
$
  R_{34},R_{45},\ldots,R_{n,n+1}.
$
Therefore
$
  \widetilde{\mathcal E}_n
  \subset
  \widetilde{\calM}_n,
$
which proves Proposition~\ref{prop:two-recover}\textup{(a)}.

\subsection*{The volume-preserving family}

The proof of Proposition~\ref{prop:two-recover}\textup{(b)} is the same
argument with the gradient component omitted.

Set
$
  Z=B-R_{23}.
$
For $n=2$,
$
  Z=S,
$
so the recovery is immediate.

For $n\ge3$,
$$
  Z
  =
  S+R_{34}+\Bigl(R_{45}+\cdots+R_{n,n+1}\Bigr).
$$
The three types of terms have Casimir eigenvalues
$ -6$, $-2$ and 0.
Hence
$$
  S
  =
  \frac1{24}
  \mathcal C(\mathcal C+2)Z
  \in
  \calM_n.
$$
Subtracting $S$ gives
$$
  T_3
  =
  R_{34}+R_{45}+\cdots+R_{n,n+1}
  \in
  \calM_n.
$$
The same tail-recovery induction used above then gives
$$
  R_{34},R_{45},\ldots,R_{n,n+1}
  \in
  \calM_n.
$$
Together with $R_{12}$, $R_{23}$, and $S$, these are precisely the
fields in $\mathcal E_n$.  Thus
$
  \mathcal E_n\subset\calM_n,
$
proving Proposition~\ref{prop:two-recover}\textup{(b)}.

\subsection*{Proof of the main theorem}

We now combine Propositions~\ref{prop:nplusone} and
\ref{prop:two-recover}.

Fix $k\ge1$, and let
$
  \overline{G(A,\widetilde B)}^{\,k}
 $
 and 
$
  \overline{G(A,B)}^{\,k}
$
denote the corresponding closures in $\Diff^k(S^n)$.  By
Lemma~\ref{lem:flow}, the smooth vector fields whose flows lie in either
closure for every time form a Lie algebra.

Proposition~\ref{prop:two-recover}\textup{(a)} therefore implies that the
flow of every field in $\widetilde{\mathcal E}_n$ lies in
$\overline{G(A,\widetilde B)}^{\,k}$.  Hence
$
  G(\widetilde{\mathcal E}_n)
  \subset
  \overline{G(A,\widetilde B)}^{\,k}.
$
By Proposition~\ref{prop:nplusone}\textup{(a)},
$G(\widetilde{\mathcal E}_n)$ is $C^\infty$-dense in
$\Diff_0(S^n)$, and therefore $C^k$-dense there.  It follows that
$
  \Diff_0(S^n)
  \subset
  \overline{G(A,\widetilde B)}^{\,k}
$ for every finite $k\ge 1$.
Equivalently, $G(A,\widetilde B)$ is $C^k$-dense in
$\Diff_0(S^n)$.

Similarly, Proposition~\ref{prop:two-recover}\textup{(b)} implies
$
  G(\mathcal E_n)
  \subset
  \overline{G(A,B)}^{\,k},
$
and Proposition~\ref{prop:nplusone}\textup{(b)} gives
$
  \SDiff_0(S^n)
  \subset
  \overline{G(A,B)}^{\,k}.
$
Thus $G(A,B)$ is $C^k$-dense in $\SDiff_0(S^n)$.

Finally, let $\mathcal U$ be a $C^\infty$ neighborhood of an arbitrary
diffeomorphism $\Phi$ in either target group.  Choose a finite $K$ and a
$C^K$ neighborhood $\mathcal V$ of $\Phi$ such that the intersection
of $\mathcal V$ with the relevant target group is contained in
$\mathcal U$.  Taking $k\ge\max\{1,K\}$ and using the finite-$C^k$
density just proved gives an element of the corresponding two-generator
group in $\mathcal U$.  Since
$
  G(A,\widetilde B)\subset\Diff_0(S^n)
  $
and
$  G(A,B)\subset\SDiff_0(S^n),
$
we conclude that
$
  \overline{G(A,\widetilde B)}^{\,C^\infty}
  =
  \Diff_0(S^n)$
and
$  \overline{G(A,B)}^{\,C^\infty}
  =
  \SDiff_0(S^n).
$

This proves both parts of Theorem~\ref{thm:main}.

\section{Related work}

Agrachev and Sarychev developed Lie-algebraic approximation methods for
diffeomorphism groups and applied them to controllability questions
\cite{AgrachevSarychev2008,AgrachevSarychev2022}.  On $S^2$ their
Hamiltonian generating family uses three independent linear spherical
harmonics, one quadratic harmonic, and one cubic harmonic.  In the present
formulation two nonparallel linear Hamiltonians generate the third by Poisson
bracket, while the quadratic Hamiltonian $x_3^2/2$ already supplies the
degree-raising mechanism.  Thus no separate cubic Hamiltonian is needed.
The same three-dimensional ladder persists inside the transverse vector
spherical harmonics on every $S^n$.

The recent work of Jiang and Chen \cite{JiangChen2026} is more directly
related to the two-generation result for the full diffeomorphism group.
They prove that the Lie algebra of algebraic vector fields on $S^2$ is
generated exactly by two explicit fields and deduce approximation of every
diffeomorphism isotopic to the identity.  Thus they have priority for the
two-generator theorem for $\Diff_0(S^2)$, and their result is stronger at
the algebraic level.  Their construction and the $S^2$ case here share the
same basic features: one generator is a rotation, the other combines
Hamiltonian and gradient pieces, and the components are separated
spectrally.  Jiang and Chen also announce a forthcoming higher-dimensional
extension of their algebraic result.

Berger and Turaev \cite{BergerTuraev} study generation by nonlinear shears
in analytic Hamiltonian dynamics in flat symplectic settings.  Related
algebraic generation questions have also been considered in affine space;
in particular, Beldiev and Pogudin \cite{BeldievPogudin2026} obtain a
$1.5$-generation result for polynomial vector fields of constant
divergence.  These works provide a broader context for the finite-generation
questions considered here.

\subsection*{Acknowledgements}

I am grateful to Jerry Kazdan for first calling the generalized Minkowski problem to my attention, and to 
Herman Gluck for asking, in the course of discussions about that problem, what 
subgroup of $\SDiff_0(S^2)$ --- and, more generally, of $\SDiff_0(S^n)$ --- might be generated by 
rotational shears. That question led to the present paper.


\end{document}